\newcommand{\mk}{m}
\begin{document}
\begin{frontmatter}

  \title{On the Post Selection Inference constant under Restricted Isometry Properties}
  \runtitle{Post selection inference under RIP}
\date{\today}

\begin{aug}
\author{\fnms{François} \snm{Bachoc}
\ead[label=e1]{francois.bachoc@math.univ-toulouse.fr}}
\address{
Institut de Mathématiques de Toulouse; \\
UMR 5219; Université de Toulouse; CNRS\\
UPS, F-31062 Toulouse Cedex 9, France\\
\printead{e1}\\
}


\author{\fnms{Gilles} \snm{Blanchard}
\ead[label=e2]{gilles.blanchard@math.uni-potsdam.de}}
\address{
Universit\"at Potsdam, Institut f\"ur Mathematik\\
Karl-Liebknecht-Stra{\ss}e 24-25 14476 Potsdam, Germany\\
\printead{e2}\\
}


\author{\fnms{Pierre} \snm{Neuvial}
\ead[label=e3]{pierre.neuvial@math.univ-toulouse.fr}}
\address{
Institut de Mathématiques de Toulouse; \\
UMR 5219; Université de Toulouse; CNRS\\
UPS, F-31062 Toulouse Cedex 9, France\\
\printead{e3}\\
}

\runauthor{F. Bachoc, G. Blanchard, P. Neuvial}

\end{aug}

\maketitle

  \begin{abstract}
    Uniformly valid confidence intervals post model selection in
    regression can be constructed based on Post-Selection Inference
    (PoSI) constants.
    PoSI constants are minimal for orthogonal design
    matrices, and can be upper bounded in function of the sparsity of
    the set of models under consideration, for generic design matrices.

    In order to improve on these generic sparse upper bounds, we consider design matrices satisfying a Restricted Isometry Property (RIP) condition. We provide a new upper bound on the PoSI constant in this setting. This upper bound is an explicit function of the RIP constant of the design matrix, thereby giving an interpolation between the orthogonal setting and the generic sparse setting.  We show that this upper bound is asymptotically optimal in many settings by constructing a matching lower bound.
  \end{abstract}

\begin{keyword}[class=MSC]
\kwd{62J05}
\kwd{62J15}
\kwd{62F25}
\end{keyword}

\begin{keyword}
\kwd{Inference post model-selection}
\kwd{Confidence intervals}
\kwd{PoSI constants}
\kwd{Linear Regression}
\kwd{High-dimensional Inference}
\kwd{Sparsity}
\kwd{Restricted Isometry Property}
\end{keyword}

\end{frontmatter}

\section{Introduction}

Fitting a statistical model to data is often preceded by a model selection step. The construction of valid statistical procedures in such post model selection situations is quite challenging (cf. \cite{leeb05model, leeb2006, leeb08model}, \cite{kabaila06large} and \cite{Poe09a}, and the references given in that literature), and has recently attracted a considerable amount of attention. Among
various recent references in this context, we can mention those addressing sparse high dimensional settings with a focus on lasso-type model selection procedures \cite{Bel11a, Bel14a, van14a,Zha14a}, those aiming for conditional coverage properties for polyhedral-type model selection procedures \cite{Fit15a,lee15exact,LeeTay14,Tib15a,tibshirani2014exact} and those achieving valid post selection inference universally over the model selection procedure \cite{bachoc14valid,bachoc2016uniformly,Berk13}.

In this paper, we shall focus on the latter type of approach and
adopt the setting introduced in \cite{Berk13}. In that work, a
linear Gaussian regression model is considered, based on an $n \times p$ design matrix $X$.
A model $M \subset \{1,...,p \}$ is defined as a subset of indices of the $p$ covariates. For a family $\cM \subset \set{ M | M \subset \set{1,\ldots,p}}$ of admissible models, it is shown in \cite{Berk13} that a universal coverage property is achievable (see Section \ref{section:setting:notation}) by using a family of confidence intervals whose sizes are proportional to a constant $K(X,\cM) >0$. This constant $K(X,\cM)$ is called a PoSI (Post-Selection Inference) constant in \cite{Berk13}. This setting was later extended to prediction problems in \cite{bachoc14valid} and to misspecified non-linear settings in \cite{bachoc2016uniformly}.

The focus of this paper is on the order of magnitude of the PoSI constant $K(X,\cM)$ for large $p$. We shall consider $n \geq p$ for simplicity of exposition in the rest of this section
(and asymptotics $n,p\rightarrow \infty$). It is shown in \cite{Berk13} that $K(X,\cM)= \Omega(\sqrt{\log(p)})$; this rate is reached in particular when $X$ has orthogonal columns. On the other hand, in full generality
$K(X,\cM)=O(\sqrt{p})$ for all $X$. It can also be shown, as discussed in an intermediary version of \cite{zhang17spherical}, that when $\cM$ is composed of $s$-sparse submodels, the sharper
upper bound  $K(X,\cM) = O(\sqrt{s \log(p/s)})$ holds.
Hence, intuitively, design matrices that are close to orthogonal and consideration of sparse models yield smaller PoSI constants.

In this paper, we obtain additional quantitative insights for this intuition, by considering design matrices $X$ satisfying restricted isometry property (RIP) conditions. RIP conditions have become central in high dimensional statistics and compressed sensing \cite{buhlmann2011statistics,candes2005decoding,foucart2013mathematical}.
In the $s$-sparse setting and for design matrices $X$ that satisfy a RIP property of order $s$ with RIP constant $\delta \to 0$, we show that $K(X,\cM)=
O(\sqrt{\log(p)} + \delta \sqrt{s \log(p/s)})$. This corresponds to the intuition that for such matrices,
any subset of $s$ columns of $X$ is ``approximately orthogonal''.
Thus, under the RIP condition we improve the upper bound of \cite{zhang17spherical} for the $s$-sparse case, by up to a factor $\delta \to 0$. We show that our upper bound is complementary to the bounds recently proposed in \cite{kuchibhotla2018model}. In addition, we obtain lower bounds on $K(X,\cM)$ for a class of design matrices that extends the equi-correlated design matrix in \cite{Berk13}. From these lower bounds,
we show that the new upper bound we provide is optimal, in a large range of situations.

While the main interest of our results is theoretical, our suggested upper bound can be practically useful in cases where it is computable whereas the PoSI constant $K(X,\mathcal{M})$ is not. The only challenge for computing our upper bound is to find a value $\delta$ for which the design matrix $X$ satisfies a RIP property. While this is currently challenging in general for large $p$, we discuss, in this paper, specific cases where it is feasible.

The rest of the paper is organized as follows. In Section \ref{section:setting:notation} we introduce in more details the setting and the PoSI constant $K(X,\cM)$. In Section \ref{sec:upper-bound-rip} we introduce the RIP condition, provide the upper bound on $K(X,\cM)$ and discuss its theoretical comparison with \cite{kuchibhotla2018model} and its applicability. In Section \ref{sec:lower-bound} we provide the lower bound and the optimality result for the upper bound. All the proofs are given in the appendix.

\section{Settings and notation} \label{section:setting:notation}

\subsection{PoSI confidence intervals}

We consider and review briefly the framework introduced by \cite{Berk13} for which
the so-called PoSI constant plays a central role.
The goal is to construct post-model selection confidence intervals that are agnostic with respect
to the model selection mehod used. The authors of \cite{Berk13} assume a Gaussian vector of observations
\begin{equation} \label{eq:gaussian:linear:model}
Y = \mu + \epsilon,
\end{equation}
where the $n \times 1$ mean vector $\mu$ is fixed and unknown, and $\epsilon$ follows the $\mathcal{N}(0, \sigma^2 I_n)$ distribution where $\sigma^2 >0$ is unknown. Consider an $n \times p$ fixed design matrix $X$, whose columns correspond to explanatory variables for $\mu$. It is not necessarily assumed that $\mu$ belongs to the image of $X$ or that $n \geq p$.

A model $M$ corresponds to a subset of selected variables in $\set{1,\ldots,p}$. A set of models of interest $\cM \subset \cM_{all} = \set{ M | M\subset \set{1,\ldots,p}}$ is supposed to be given. Following \cite{Berk13}, for any $M \in \cM$, the projection based vector of regression coefficients $\beta_M$ is a target of inference, with
 \begin{equation}
\beta_M := \argmin_{\beta \in \mathbb{R}^{|M|}} \norm{\mu - X_M\beta}^2 =
(X_M^t X_M)^{-1} X_M^t \mu,
\label{eq:beta-M}
\end{equation}
where $X_M$ is the submatrix of $X$ formed of the columns of $X$ with indices in $M$, and
where we assume that for each $M \in \cM$, $X_M$ has full rank and $M$ is non-empty. We refer to \cite{Berk13} for an interpretation of the vector $\beta_M$ and a justification for considering it as a target of inference. In \cite{Berk13}, a family of confidence intervals $( \mathrm{CI}_{i,M} ; i \in M \in \cM)$ for $\beta_M$ is introduced, containing the targets $(\beta_M)_{M \in \cM}$ 
simultaneously with probability at least $1-\alpha$. The confidence intervals take the form
\begin{equation} \label{eq:CI}
\mathrm{CI}_{i,M}
:=
(\hat{\beta}_M)_{i.M}
\pm
\hat{\sigma}
\| v_{M,i} \|
K(X,\cM,\alpha,r);
\end{equation}
the different quantities involved, which we now define, are standard ingredients for univariate
confidence intervals for regression coefficients in the Gaussian model, except for the last factor (the ``PoSI constant'') which will account for multiplicity of covariates and models, and their
simultaneous coverage.
The confidence interval is centered at
$\hat{\beta}_M := (X_M^t X_M)^{-1} X_M^t Y$, the ordinary least squares estimator
of $\beta_M$; also, if $M = \{ j_1,\ldots,j_{|M|} \}$ with $j_1 <\ldots <j_{|M|}$,
for $i\in M$ we denote by $i.M$ the number $k \in \mathbb{N}$ for which $j_k = i$, that is, the rank of the $i$-th element in the subset $M$.
The quantity $\hat{\sigma}^2$ is an unbiased estimator of $\sigma^2$, more specifically it
is assumed that it is an observable random variable, such that $\hat{\sigma}^{2}$
is independent of $P_{X}Y$ and is distributed as $\sigma ^{2}/r$
times a chi-square distributed random variable with $r$ degrees of freedom
($P_{X}$ denoting the orthogonal projection onto the column space of $X$). We allow for $r = \infty$ corresponding to $\hat{\sigma} = \sigma$,
i.e., the case of known variance (also called Gaussian limiting case). In \cite{Berk13}, it is assumed that $\hat{\sigma}$ exists and it is shown that this indeed holds in some specific situations. A further analysis of the existence of $\hat{\sigma}$ is provided in \cite{bachoc14valid,bachoc2016uniformly}.

The next quantity to define is
\begin{equation} \label{eq:vMi}
v_{M,i}^t
:=
(e_{i.M}^{|M|})^t G_M^{-1} X_M^t \in \mbr^n,
\end{equation}
where   $e_a^{b}$ is the $a$-th base column vector of $\mathbb{R}^b$; and
$G_M := X_M^t X_M$ is the $|M| \times |M|$ Gram matrix formed from the columns of $X_M$.
Observe that $v_{M,i}$ is nothing more than the row corresponding to covariate $i$ in
the estimation matrix $G_M^{-1} X_M^t$, in other words $(\hat{\beta}_M)_{i.M} = v_{M,i}^tY$.

Finally,  $K(X,\cM,\alpha,r)$ is called a PoSI constant and we turn to its definition.
We shall occasionally write for simplicity $K(X,\cM,\alpha,r) = K( X , \cM )$. Furthermore, if the value of $r$ is not specified in $K( X , \cM )$, it is implicit that $r = \infty$.

\begin{definition} \label{def:Kposi}
  Let $\cM \subset \cM_{all}$ for which each $M\in \cM$ is non-empty, and so that $X_M$ has full rank. Let also
\[
w_{M,i}
=
\begin{cases}
v_{M,i} / \|v_{M,i}\|, & \mbox{if } \|v_{M,i}\| \neq 0; \\
0 \in \mathbb{R}^n  & \mbox{else}.
\end{cases}
\]
Let $\xi$ be a Gaussian vector with zero mean vector and identity covariance matrix on $\mathbb{R}^n$. Let $N$ be a random variable, independent of $\xi$, and so that $r N^2$ follows a chi-square distribution with $r$ degrees of freedom. If $r = \infty$, then we let $N=1$. For $\alpha \in (0,1)$, $K( X , \cM , \alpha , r )$ is defined as the $1 - \alpha$ quantile of
\begin{equation}
\label{eq:gamma_Mr}
\gamma_{\cM,r} := \frac{1}{N} \max_{M \in \cM, i \in M} \left| w_{M,i}^t \xi \right|.
\end{equation}

\end{definition}
We remark that $K( X , \cM , \alpha , r )$ is the same as in \cite{Berk13}. For $j = 1,\ldots,p$, let $X_j$ be the column $j$ of $X$.
We also remark, from \cite{Berk13}, that the vector
$v_{M,i}/\norm{v_{M,i}}^2$ in \eqref{eq:vMi} is the residual of the regression of $X_{i}$ with respect to the variables $\set{j | j \in M\setminus\set{i}}$;
in other words, it is the component of the vector $X_{i}$ orthogonal to $\mathrm{Span}\set{ X_j | j \in M\setminus\set{i}}$.
It is shown in \cite{Berk13} that we have, with probability larger than $1 - \alpha$,
\begin{equation} \label{eq:coverage}
\forall M \in \cM,
~ ~
\forall i \in M,
~ ~
(\beta_M)_{i.M} \in \mathrm{CI}_{i,M}.
\end{equation}
Hence, the PoSI confidence intervals guarantee a simultaneous coverage of all the projection-based regression coefficients, over all models $M$ in the set $\cM$.

For a square symmetric non-negative matrix $A$, we let 
$$\mathrm{corr}(A) = (\mathrm{diag}(A)^\dagger)^{1/2} A (\mathrm{diag}(A)^\dagger)^{1/2},$$
where $\mathrm{diag}(A)$ is obtained by setting all the non-diagonal elements of $A$ to zero and where $B^\dagger$ is the Moore-Penrose pseudo-inverse of $B$. Then we show in the following lemma that $K(X,\cM)$ depends on $X$ only through $\mathrm{corr}( X^t X)$.
\begin{lemma} \label{lem:posi:corrX}
Let $X$ and $Z$ be two $n \times p$ and $m \times p$ matrices satisfying the relation $\mathrm{corr}( X^t X) = \mathrm{corr}( Z^t Z)$. Then $K( X , \cM , \alpha , r ) = K( Z , \cM , \alpha , r )$.
\end{lemma}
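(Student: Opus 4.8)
The plan is to show that the random variable $\gamma_{\cM,r}$ of \eqref{eq:gamma_Mr} has the same law when it is built from $X$ as when it is built from $Z$; since $K(X,\cM,\alpha,r)$ and $K(Z,\cM,\alpha,r)$ are the corresponding $1-\alpha$ quantiles, the equality of the two PoSI constants will follow at once. The first step is to reduce this to an equality of matrices. Write $\cI := \{(M,i) : i \in M \in \cM\}$, a finite index set depending only on $\cM$. Then $\gamma_{\cM,r} = N^{-1}\max_{(M,i)\in\cI}|g_{(M,i)}|$, where $g := (w_{M,i}^t\xi)_{(M,i)\in\cI}$ is a centered Gaussian vector whose covariance matrix is $\Gamma := (w_{M,i}^t w_{M',i'})_{(M,i),(M',i')\in\cI}$ (here one uses that $\xi$ has the identity covariance), and where $N$ is independent of $g$ with a law depending only on $r$. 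Hence the law of $\gamma_{\cM,r}$, and therefore $K(\cdot,\cM,\alpha,r)$, depends on the design only through the matrix $\Gamma$, so it suffices to prove that $\Gamma$ is the same for $X$ and for $Z$.

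Next I would exploit two invariances. First, rescaling the columns of the design by positive constants changes neither $\Gamma$ nor $\mathrm{corr}(X^tX)$: if $\Lambda$ is a positive diagonal $p\times p$ matrix and $\widetilde X := X\Lambda$, then from \eqref{eq:vMi} one gets $\widetilde v_{M,i}^t = \Lambda_{ii}^{-1} v_{M,i}^t$, so the normalized vectors $w_{M,i}$ — and hence $\gamma_{\cM,r}$ itself — are unchanged, while $\mathrm{corr}(\Lambda X^tX\Lambda)=\mathrm{corr}(X^tX)$. The assumption that each $X_M$, $M\in\cM$, has full rank forces $X_i^tX_i>0$ for every $i$ lying in some $M\in\cM$, so we may rescale $X$ so that those columns have unit norm, and likewise for $Z$. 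After this reduction $X^tX$ and $Z^tZ$ are both equal to the common matrix $C := \mathrm{corr}(X^tX) = \mathrm{corr}(Z^tZ)$, whose principal blocks $C_{MM}$, $M\in\cM$, are invertible (consistently for $X$ and $Z$, since $C_{MM}=X_M^tX_M=Z_M^tZ_M$). Second, with $X^tX = C$ one reads off directly from \eqref{eq:vMi}, using $G_M = C_{MM}$ and $X_M^tX_{M'}=C_{MM'}$, that
\[
v_{M,i}^t v_{M',i'} = (e^{|M|}_{i.M})^t (C_{MM})^{-1} C_{MM'} (C_{M'M'})^{-1} e^{|M'|}_{i'.M'},
\]
which is a function of $C$ alone; taking $M=M'$, $i=i'$ gives $\|v_{M,i}\|^2 = \big((C_{MM})^{-1}\big)_{i.M,\,i.M} > 0$ (positive because $(C_{MM})^{-1}$ is positive definite), so the degenerate branch in the definition of $w_{M,i}$ never occurs and every entry $w_{M,i}^t w_{M',i'} = v_{M,i}^t v_{M',i'}/(\|v_{M,i}\|\,\|v_{M',i'}\|)$ of $\Gamma$ is a function of $C$ alone. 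As $C$ is the same for $X$ and $Z$, we conclude $\Gamma^X=\Gamma^Z$, which finishes the proof.

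I do not expect a genuine obstacle here. The only two points needing care are the reduction in the first step — which relies on $\xi$ being standard Gaussian and on $N$ being independent of it, so that the law of $\gamma_{\cM,r}$ is a fixed functional of $\Gamma$ and of the law of $N$ — and the routine but slightly fiddly bookkeeping of principal submatrices and of the rank indices $i.M$, including the verification that the full-rank hypothesis simultaneously legitimizes all the inverses and all the diagonal rescalings used for both $X$ and $Z$.
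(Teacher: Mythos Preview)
Your proof is correct and follows essentially the same route as the paper: reduce $K(\cdot,\cM,\alpha,r)$ to the covariance matrix of $(w_{M,i}^t\xi)_{(M,i)}$, show this matrix depends on $X$ only through $X^tX$, and use invariance of the $w_{M,i}$ under column rescaling to pass from $X^tX$ to $\mathrm{corr}(X^tX)$. The only cosmetic difference is the order of operations---you normalize first and then compute, while the paper computes $\Sigma=(v_{M,i}^t v_{M',i'})$ first and then checks that $\mathrm{corr}(\Sigma)$ is invariant under $X\mapsto XD$.
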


\subsection{Order of magnitude of the PoSI constant}

The confidence intervals in \eqref{eq:CI} are similar in form to the standard confidence intervals that one would use for a single fixed model $M$ and a fixed $i \in M$. For a standard interval, $K( X , \cM )$ would be replaced by a standard Gaussian or Student quantile. Of course, the standard intervals do not account for multiplicity and do not have uniform coverage over $i \in M \in \cM$ (see \cite{bachoc14valid,bachoc2016uniformly}). Hence $K( X , \cM )$ is the inflation factor or correction over standard intervals to get uniform coverage; it must go to infinity as $p \to \infty$ \cite{Berk13}. Studying the asymptotic order of magnitude of $K( X , \cM )$ is thus an important problem, as this order of magnitude corresponds to the price one has to pay in order to obtain universally valid post model selection inference.

We now present the existing results on the asymptotic order of magnitude of $K( X , \cM )$.
Let us define
\begin{equation}
\gamma_{\cM, \infty} :=  \max_{M \in \cM, i \in M} \left| w_{M,i}^t \xi \right|\,,
\end{equation}
so that $\gamma_{\cM,r} = \gamma_{\cM, \infty}/N$, where we recall that $r N^2$ follows a chi-square distribution with $r$ degrees of freedom.

We can relate the quantiles of $\gamma_{\cM,r}$ (which coincide with the PoSI constants $K( X , \cM )$) to the expectation $\e{\gamma_{\cM, \infty}}$
 by the following argument based on
Gaussian concentration (see Appendix~\ref{sec:gauss-conc}):
\begin{proposition}
  \label{prop:boundkt}
  Let $T(\mu,r,\alpha)$ denote the $\alpha$-quantile of a noncentral $T$ distribution with $r$
  degrees of freedom and noncentrality parameter $\mu$. Then
  \[
    K( X , \cM , \alpha , r ) \leq T(\e{\gamma_{\cM,\infty}},r,1-\alpha/2).
  \]
  To be more concrete, we observe that we can get a rough estimate of the latter quantile via
  \[
    T(\e{\gamma_{\cM,\infty}},r,1-\alpha/2) \leq     \frac{ \e{\gamma_{\cM,\infty}} + \sqrt{2 \log (4/\alpha)}}{(1- 2\sqrt{2\log (4/\alpha)/r})_+};
  \]
  furthermore, as $r \to +\infty$, this quantile reduces to the  $(1-\alpha/2)$ quantile of a
  Gaussian distribution with mean $\e{\gamma_{\cM,\infty}}$ and unit variance.
  \end{proposition}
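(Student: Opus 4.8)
The plan is to realize $\gamma_{\cM,r}$ as a ratio of a Gaussian supremum over a scaled chi variable, and then bound its quantile by the quantile of a single noncentral $T$ random variable whose noncentrality is the \emph{expectation} of the Gaussian supremum. First I would fix the (deterministic) collection of unit vectors $\{w_{M,i}\}$ and set $Z := \gamma_{\cM,\infty} = \max_{M,i} |w_{M,i}^t\xi|$, so that $\gamma_{\cM,r} = Z/N$ with $N$ independent of $\xi$ and $rN^2 \sim \chi^2_r$. The key probabilistic input is Gaussian concentration (the Borell--TIS inequality): since $\xi \mapsto \max_{M,i}|w_{M,i}^t\xi|$ is $1$-Lipschitz (each $w_{M,i}$ being a unit vector, and the max of $1$-Lipschitz functions is $1$-Lipschitz), we get $\prob{Z \geq \e{Z} + t} \leq e^{-t^2/2}$ for all $t \geq 0$, and by symmetry a two-sided bound $\prob{|Z - \e{Z}| \geq t} \le 2e^{-t^2/2}$. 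Equivalently, $Z$ is stochastically dominated (in the upper tail) by $\e{Z} + |g|$ where $g \sim \cN(0,1)$; more precisely, for each $x \geq \e Z$, $\prob{Z > x} \le \prob{\e Z + g > x}$.

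Next I would use this stochastic domination inside the ratio. For any threshold $x$, write
\[
\prob{\gamma_{\cM,r} > x} = \prob{Z > xN} = \e[1]{\,\prob{Z > xN \mid N}\,},
\]
and on the event $\{xN \geq \e Z\}$ bound the inner probability by $\prob{\e Z + g > xN \mid N}$, where $g\sim\cN(0,1)$ is independent of $N$; on the complementary event the crude bound $\prob{Z > xN\mid N}\le 1$ combined with a separate argument (or simply noting that we only need the bound for $x$ at least of order $\e Z/\e N$, which holds since the target quantile $x=T(\e Z,r,1-\alpha/2)$ exceeds $\e Z$) suffices. Thus $\prob{\gamma_{\cM,r} > x} \le \prob{(\e Z + g)/N > x}$ for $x \ge \e Z$. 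But $(\e Z + g)/N$ is, by definition, a noncentral $T$ random variable with $r$ degrees of freedom and noncentrality parameter $\e Z = \e{\gamma_{\cM,\infty}}$. Taking $x$ to be the $(1-\alpha/2)$ quantile of that noncentral $T$ (which is indeed $\ge \e Z$, since the noncentral $T$ with parameter $\mu$ has its $1/2$-quantile near $\mu$), we get $\prob{\gamma_{\cM,r} > T(\e{\gamma_{\cM,\infty}},r,1-\alpha/2)} \le \alpha/2 < \alpha$, hence $K(X,\cM,\alpha,r) \le T(\e{\gamma_{\cM,\infty}},r,1-\alpha/2)$.

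For the explicit estimate, I would write $N = \sqrt{\chi^2_r/r}$ and control its lower tail by the standard chi-square concentration bound $\prob{\chi^2_r \le r - 2\sqrt{rt}} \le e^{-t}$, giving $\prob{N \le 1 - \sqrt{2t/r}} \le e^{-t}$; and control $g$ by $\prob{g > \sqrt{2t'}} \le e^{-t'}$. Choosing $t = t' = \log(4/\alpha)$ and intersecting the two favorable events (total failure probability $\le \alpha/2$), on that event $(\e Z + g)/N \le (\e Z + \sqrt{2\log(4/\alpha)})/(1 - \sqrt{2\log(4/\alpha)/r})_+$, which yields the stated bound on the quantile. The $r \to \infty$ statement is immediate since then $N \to 1$ a.s.\ and the noncentral $T$ converges to $\cN(\e{\gamma_{\cM,\infty}},1)$. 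The main obstacle — really the only delicate point — is handling the event $\{xN < \e Z\}$ cleanly in the domination step; I expect this is dispatched either by restricting attention to $x \ge \e Z$ (enough for our purposes, since the quantile we bound against lies above $\e Z$) or by the usual trick of coupling $Z$ with $\e Z + |g|$ on the whole space so that the ratio bound holds pathwise. The Lipschitz property of $w\mapsto\max|w_{M,i}^t\xi|$ and the independence structure between $\xi$ and $N$ do all the remaining work.
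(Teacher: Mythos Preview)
Your overall strategy is right, and the fallback you mention (a pathwise coupling $Z \le \e{Z} + |\zeta|$) is exactly the paper's route; but your primary argument has two genuine gaps. First, the step from the Borell--TIS tail bound $\prob{Z \ge \e{Z}+t} \le e^{-t^2/2}$ to stochastic domination of $Z$ by $\e{Z}+g$ (or by $\e{Z}+|g|$) is not valid: since $2(1-\Phi(t)) \le e^{-t^2/2}$ for all $t\ge 0$, the exponential tail is \emph{weaker} than the half-Gaussian tail $\prob{|g|>t}$, so the implication goes the wrong way. The paper does not pass through tail bounds; it invokes Gaussian concentration directly in coupling form (Cirel'son--Ibragimov--Sudakov): for $F$ $1$-Lipschitz there exists a standard normal $\zeta$, independent of $N$, with $F(\xi) \le \e{F(\xi)} + |\zeta|$ almost surely. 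Second, even granting tail domination, your conditioning step does not give $\prob{Z/N>x} \le \prob{(\e{Z}+g)/N>x}$: on $\{xN<\e{Z}\}$ you bound $\prob{Z>xN\mid N}$ by $1$ while $\prob{\e{Z}+g>xN\mid N}$ is only guaranteed to exceed $\tfrac12$ there, so the integrated inequality can fail; restricting to $x\ge\e{Z}$ does not help, since $N$ is random and $\{xN<\e{Z}\}$ has positive probability for every finite $x$ when $r<\infty$.

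The pathwise coupling resolves both issues at once and is what the paper does: $Z/N \le (\e{Z}+|\zeta|)/N = \max(T_+,T_-)$ with $T_\pm=(\e{Z}\pm\zeta)/N$ each noncentral $T$, and the union bound $\prob{\max(T_+,T_-)>x}\le 2\,\prob{T_+>x}$ is the actual source of the factor $\alpha/2$ in the statement --- not any restriction on $x$. For the explicit estimate, the paper uses Birg\'e's inequality $\prob{\sqrt{V}\le\sqrt{r}-2\sqrt{2\log(1/\eta)}}\le\eta$ for $V\sim\chi^2_r$, which produces the denominator $(1-2\sqrt{2\log(4/\alpha)/r})_+$; your claimed bound $\prob{N\le 1-\sqrt{2t/r}}\le e^{-t}$ does not follow from the chi-square inequality you cite (the inclusion $\{N\le 1-\sqrt{2t/r}\}\subset\{N^2\le 1-2\sqrt{t/r}\}$ fails for large $t/r$) and would drop the factor $2$.
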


  The point of the above estimate is that the dependence in the set of models $\cM$
  is only present through $\e{\gamma_{\cM,\infty}}$. Therefore, we will focus in this paper on the problem of bounding $\e{\gamma_{\cM,\infty}}$, which is nothing more than the Gaussian width \cite[chapter 9]{foucart2013mathematical} of the set $\Gamma_{\cM} = \set{\pm w_{M,i} | M \in \cM, i \in M }$.

When $n \geq p$, it is shown in \cite{Berk13} that $\e{\gamma_{\cM,\infty}}$ is no smaller than $\sqrt{2\log(2p)}$ and asymptotically no larger than $\sqrt{p}$. These two lower and upper bound are reached by respectively orthogonal design matrices and equi-correlated design matrices (see \cite{Berk13}).

We now concentrate on $s$-sparse models. For $s\leq p$, let us define $\cM_s = \set{ M | M\subset \set{1,\ldots,p}, |M| \leq s}$.
In this case, using a direct argument based on cardinality, one gets the following generic upper bound (proved in Appendix~\ref{sec:proofs}).
\begin{lemma}
For any $s,n,p \in \mathbb{N}$, with $s \leq n$, we have
\label{lm:simplecard}
  \begin{equation}
    \label{eq:simplecard}
    \e{\gamma_{\cM_{s},\infty}} \leq \sqrt{2  s \log (6p/s)}\,.
  \end{equation}
\end{lemma}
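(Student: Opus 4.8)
The plan is to recognize $\gamma_{\cM_s,\infty}$ as the maximum of a finite family of (dependent) standard Gaussian random variables and combine the classical sub-Gaussian maximal inequality with a cardinality bound on the index set of pairs $(M,i)$.

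First I would unpack Definition~\ref{def:Kposi}: by construction $\gamma_{\cM_s,\infty}=\max_{M\in\cM_s,\,i\in M}\left|w_{M,i}^t\xi\right|$, and for each admissible pair $(M,i)$ the variable $w_{M,i}^t\xi$ is distributed as $\cN(0,1)$ when $\|v_{M,i}\|\neq0$ and is identically $0$ otherwise, since $\|w_{M,i}\|\in\{0,1\}$. Let $N$ be the number of pairs $(M,i)$ with $M\in\cM_s$ and $i\in M$; discarding the degenerate pairs only lowers the maximum, so we may regard $\gamma_{\cM_s,\infty}$ as the maximum of at most $N$ unit-variance Gaussians.

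Next I would invoke the standard bound: for any random variables $g_1,\dots,g_N$ each $\cN(0,1)$-distributed, with arbitrary joint law, $\e{\max_{j\le N}|g_j|}\le\sqrt{2\log(2N)}$. This is the usual moment-generating-function argument: for $\lambda>0$, Jensen's inequality gives $\exp\!\big(\lambda\,\e{\max_j|g_j|}\big)\le\e{\exp(\lambda\max_j|g_j|)}\le\sum_{j=1}^N\big(\e{e^{\lambda g_j}}+\e{e^{-\lambda g_j}}\big)=2N e^{\lambda^2/2}$, and choosing $\lambda=\sqrt{2\log(2N)}$ yields the claim. Applying it to the family $\{w_{M,i}^t\xi\}$ gives $\e{\gamma_{\cM_s,\infty}}\le\sqrt{2\log(2N)}$.

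It then remains to bound $N$ and check the arithmetic. Since a model of size $k$ contributes $k\le s$ choices of $i$, one has $N\le s\sum_{k=1}^s\binom{p}{k}\le s\,(ep/s)^s$, using the classical estimate $\sum_{k=0}^s\binom{p}{k}\le(ep/s)^s$ valid for $1\le s\le p$. Hence $\e{\gamma_{\cM_s,\infty}}\le\sqrt{2\log\!\big(2s(ep/s)^s\big)}$, and the asserted bound $\sqrt{2s\log(6p/s)}$ follows once $2s(ep/s)^s\le(6p/s)^s$, i.e. $2s\le(6/e)^s$; this holds at $s=1$ since $6/e>2$, and the right-hand side grows geometrically with ratio $6/e>2$ while the left-hand side grows linearly, so it holds for every integer $s\ge1$. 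I do not foresee any genuine obstacle: the only mildly delicate point is calibrating the constant $6$ so that the cardinality factor fits, which is precisely why the crude bound $N\le s(ep/s)^s$ is convenient (a slightly tighter count via $k\binom{p}{k}=p\binom{p-1}{k-1}$ is available but does not improve the final arithmetic and treats $s=1$ less cleanly).
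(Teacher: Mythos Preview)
Your proof is correct and follows essentially the same route as the paper: both use the sub-Gaussian maximal inequality $\e{\max_j|g_j|}\le\sqrt{2\log(2N)}$ and the same cardinality bound $N\le s(ep/s)^s$. The only difference is cosmetic, in the verification of the constant: the paper absorbs the factor $2s$ via $\log(2s)\le 2s/e$ (an instance of $\log x\le x/e$) and then checks $e^{1+2/e}\le 6$, whereas you check the equivalent inequality $2s\le(6/e)^s$ directly by induction.
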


We remark that an asymptotic version of the bound in Lemma \ref{lm:simplecard} (as $p$ and $s$ go to infinity) appears in an intermediary version of \cite{zhang17spherical}.

\section{Upper bound under RIP conditions}
\label{sec:upper-bound-rip}

\subsection{Main result}

We recall the definition and a property of the RIP constant $\kappa(X,s)$ associated to a design matrix $X$ and a sparsity condition $s$ given in \cite[Chap.6]{foucart2013mathematical}: 
\begin{equation}
  \label{eq:def-rip-constant}
  \kappa(X,s) = \sup_{|M|\leq s} \norm{X_M^t X_M - I_{|M|}}_{op}.
\end{equation}
Letting $ \kappa =  \kappa(X,s)$, we have for any subset $M \subset \set{1,\ldots,p}$ such that $\abs{M} \leq s$:
\begin{equation}
  \label{RIP}  
 \forall \beta \in \mbr^{\abs{M}}, \qquad (1-\kappa)_+ \norm{\beta}^2 \leq \norm{X_M \beta }^2 \leq  (1+\kappa) \norm{\beta}^2\,.
\end{equation}
\begin{remark} The RIP condition may also be stated between norms instead of squared norms in \eqref{RIP}. Following \cite[Chap.6]{foucart2013mathematical} we will consider the formulation in terms of squared norms, which is more convenient here.
\end{remark}

Since the PoSI constant $K(X,\cM)$ only depends on $\mathrm{corr}(X^tX)$ (see Lemma \ref{lem:posi:corrX}), we shall rather consider the RIP constant associated to $\mathrm{corr}(X^tX)$. We let
\begin{equation}
  \label{eq:def-rip-constant:corr}
  \delta(X,s) = \sup_{|M|\leq s} \norm{ \mathrm{corr}(X_M^t X_M) - I_{|M|}}_{op}.
\end{equation}

Any upper bound for $\kappa(X,s)$ yields an upper bound for $\delta(X,s)$ as shown in the following lemma.

\begin{lemma}
\label{lem:bar:delta:delta}
Let $\kappa = \kappa(X,s)$. If $\kappa \in [0,1)$, then
\[
\delta(X,s) \leq \frac {2 \kappa}{1 - \kappa}.
\]
\end{lemma}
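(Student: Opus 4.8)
The plan is to fix an arbitrary model $M$ with $\abs{M} \leq s$, set $A = X_M^t X_M$ and $D = \mathrm{diag}(A)$, bound $\norm{\mathrm{corr}(A) - I_{\abs{M}}}_{op}$ by a quantity depending only on $\kappa$ and not on $M$, and then take the supremum over $M$ to conclude, since $\delta(X,s) = \sup_{\abs{M}\leq s}\norm{\mathrm{corr}(X_M^t X_M) - I_{\abs{M}}}_{op}$.

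First I would record that the diagonal entries of $A$ are pinched near $1$: for each coordinate $k$, $\abs{A_{kk} - 1} = \abs{(e_k^{\abs{M}})^t(A - I_{\abs{M}})e_k^{\abs{M}}} \leq \norm{A - I_{\abs{M}}}_{op} \leq \kappa$, so $A_{kk} \in [1-\kappa, 1+\kappa]$. In particular, because $\kappa < 1$, every diagonal entry of $A$ is strictly positive, hence $D$ is invertible, the Moore--Penrose pseudo-inverse in the definition of $\mathrm{corr}$ coincides with the ordinary inverse, and $\mathrm{corr}(A) = D^{-1/2} A D^{-1/2}$. This is precisely the step where the hypothesis $\kappa \in [0,1)$ is used.

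The key algebraic identity is obtained by writing $A - D = (A - I_{\abs{M}}) - (D - I_{\abs{M}})$ and using $D^{-1/2}(D - I_{\abs{M}})D^{-1/2} = I_{\abs{M}} - D^{-1}$, which gives
\[
\mathrm{corr}(A) - I_{\abs{M}} = D^{-1/2}(A - I_{\abs{M}})D^{-1/2} - \paren{I_{\abs{M}} - D^{-1}}.
\]
By the triangle inequality and submultiplicativity of the operator norm,
\[
\norm{\mathrm{corr}(A) - I_{\abs{M}}}_{op} \leq \norm{D^{-1/2}}_{op}^2 \,\norm{A - I_{\abs{M}}}_{op} + \norm{I_{\abs{M}} - D^{-1}}_{op}.
\]
For the first term, $\norm{D^{-1/2}}_{op}^2 = \max_k A_{kk}^{-1} \leq (1-\kappa)^{-1}$ and $\norm{A - I_{\abs{M}}}_{op} \leq \kappa$, so it is at most $\kappa/(1-\kappa)$. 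For the second term, since $A_{kk} \in [1-\kappa, 1+\kappa]$ one has $1 - A_{kk}^{-1} \in [-\kappa/(1-\kappa),\, \kappa/(1+\kappa)]$, so $\norm{I_{\abs{M}} - D^{-1}}_{op} = \max_k \abs{1 - A_{kk}^{-1}} \leq \kappa/(1-\kappa)$. Adding the two bounds yields $\norm{\mathrm{corr}(A) - I_{\abs{M}}}_{op} \leq 2\kappa/(1-\kappa)$, and taking the supremum over all $M$ with $\abs{M}\leq s$ gives $\delta(X,s) \leq 2\kappa/(1-\kappa)$.

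The argument is essentially routine; there is no real obstacle beyond checking the invertibility of $D$ (and hence the identification of the pseudo-inverse with the inverse), and choosing the additive decomposition above rather than a multiplicative comparison between $\mathrm{corr}(A)$ and $A$, which is what keeps the constant clean.
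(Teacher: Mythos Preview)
Your proof is correct. Both your argument and the paper's follow the same overall plan---fix $M$, bound the diagonal entries of $A=X_M^tX_M$ in $[1-\kappa,1+\kappa]$ using the RIP, decompose $\mathrm{corr}(A)-I_{\abs{M}}$ additively, and apply the triangle inequality with operator-norm bounds---but the specific decompositions differ. The paper writes (with $D_M=D^{-1/2}$ in your notation)
\[
D_M A D_M - I = (D_M-I)AD_M + A(D_M-I) + (A-I),
\]
a three-term telescoping identity, and then bounds $\norm{D_M-I}_{op}\leq (1-\kappa)^{-1/2}-1$, $\norm{A}_{op}\leq 1+\kappa$, $\norm{D_M}_{op}\leq(1-\kappa)^{-1/2}$; the product $( (1-\kappa)^{-1/2}-1)(1+\kappa)((1-\kappa)^{-1/2}+1)+\kappa$ simplifies exactly to $2\kappa/(1-\kappa)$. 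Your two-term decomposition $D^{-1/2}(A-I)D^{-1/2}-(I-D^{-1})$ isolates the conjugated off-diagonal part and the diagonal correction separately, giving $\kappa/(1-\kappa)+\kappa/(1-\kappa)$ directly with no algebraic simplification needed at the end. Both routes are elementary and land on the same constant; yours is arguably a bit cleaner, while the paper's makes the role of $\norm{D^{-1/2}-I}_{op}$ more explicit.
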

The next theorem is the main  result of the paper. It provides a new upper bound on the PoSI constant, under RIP conditions and with sparse submodels.
We remark that in  this theorem, we do not necessarily assume that $n \geq p$.

\begin{theorem}
\label{thm:rip-upper-bound}
Let $X$ be a $n \times p$ matrix with $n,p \in \mathbb{N}$.
Let $\delta = \delta(X,s) $.
We have
\[
  \e{\gamma_{\cM_s,\infty}} \leq
\sqrt{2 \log (2p)}
+ 2 \delta \left( \frac{\sqrt{ 1+\delta}}{1-\delta}  \right)
\sqrt{ 2s \log(6p / s) }.
\]
\end{theorem}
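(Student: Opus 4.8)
The plan is to compare, vector by vector, the whitening directions $w_{M,i}$ built from $X$ with the corresponding directions $\wt{w}_{M,i}$ one would obtain from an exactly orthonormal design, and to control the deviation by the RIP constant $\delta$. Concretely, fix $M\in\cM_s$ and $i\in M$. By Lemma~\ref{lem:posi:corrX} we may assume without loss of generality that the columns of $X$ are normalized, so that $\mathrm{corr}(X_M^tX_M)=X_M^tX_M=G_M$ and $\norm{G_M-I_{|M|}}_{op}\le\delta$. Recall from the discussion after Definition~\ref{def:Kposi} that $v_{M,i}$ is (up to normalization) the residual of $X_i$ after projecting out $\mathrm{Span}\{X_j: j\in M\setminus\{i\}\}$, i.e. $v_{M,i}=X_i-\Pi_{M\setminus\{i\}}X_i$ up to scaling; equivalently $w_{M,i}$ is the unit vector in the direction $P_M^\perp X_i$ where $P_M^\perp$ projects onto the orthocomplement of the other columns. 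The first step is to write $w_{M,i}$ as an explicit linear combination $w_{M,i}=\sum_{j\in M}c_j X_j$ with coefficients $c=c(M,i)$ read off from the row of $G_M^{-1}$ indexed by $i$, normalized; from \eqref{eq:vMi}, $v_{M,i}=\sum_{j\in M}(G_M^{-1})_{i.M,j.M}X_j$, and $\norm{v_{M,i}}^2=(G_M^{-1})_{i.M,i.M}$.

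The second step is the key approximation. Let $\wt{X}$ be an $n'\times p$ matrix (with $n'$ large enough) whose columns restricted to any fixed $M$ are orthonormal — or, more cleanly, work abstractly: set $\wt w_{M,i}$ to be the unit vector proportional to $\sum_{j\in M}c_j \tilde e_j$ where $\tilde e_j$ is an orthonormal family, $c$ being the \emph{same} coefficient vector as above. The point is that $\norm{w_{M,i}-\wt w_{M,i}}$ is governed by $\norm{G_M-I}_{op}\le\delta$: writing $w_{M,i}=\sum c_j X_j$ and $\wt w_{M,i}\propto\sum c_j\tilde e_j$, one has $\norm{\sum c_jX_j}^2=c^tG_Mc$ and $\norm{\sum c_j\tilde e_j}^2=c^tc=\norm{c}^2$, so these squared norms differ by at most $\delta\norm{c}^2$; moreover the inner product $\inner{\sum c_jX_j}{\sum c_j\tilde e_j}$ — after matching the two ambient spaces — is $\norm{c}^2$ up to a $\delta\norm{c}^2$ error, using $X_j=\tilde e_j+(\text{perturbation})$ encoded in $G_M-I$. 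Feeding these into the elementary bound for the distance between two normalized vectors ($\norm{u/\norm u-u'/\norm{u'}}$ controlled by the relative discrepancy between $u,u'$) gives $\norm{w_{M,i}-\wt w_{M,i}}\le C\delta$ with an explicit constant; tracking the constants through $(1-\delta)\le\norm{X_Mc}^2/\norm c^2\le(1+\delta)$ should yield exactly the factor $2\sqrt{1+\delta}/(1-\delta)$ appearing in the statement. Here $\Gamma_{\cM_s}=\{\pm w_{M,i}\}$ and $\wt\Gamma_{\cM_s}=\{\pm\wt w_{M,i}\}$ are finite sets whose elements are in bijection and uniformly $C\delta$-close.

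The third step assembles the bound on the Gaussian width. Since $\gamma_{\cM_s,\infty}=\sup_{u\in\Gamma_{\cM_s}}\inner u\xi$ and the same for $\wt\Gamma$, and since for any two sets with a $C\delta$-close bijection one has $\sup_{u}\inner u\xi\le\sup_{u'}\inner{u'}\xi+C\delta\sup_{v\in\Gamma\cup\wt\Gamma}\abs{\inner{v/\norm v}\xi}$-type control, we get
\[
\e{\gamma_{\cM_s,\infty}}\ \le\ \e{\wt\gamma_{\cM_s,\infty}}\ +\ C\delta\,\e{\gamma_{\cM_s,\infty}'},
\]
where $\wt\gamma$ uses the orthonormalized directions and $\gamma'$ is a Gaussian width of the union (still a union over $\cM_s$ of at most-$s$-sparse unit directions). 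For the first term, because each $\wt w_{M,i}$ is itself a unit vector supported on $\le s$ coordinates of an orthonormal system, $\wt\Gamma_{\cM_s}$ sits inside the set of $s$-sparse unit vectors with coordinates in a $\le p$-dimensional orthogonal design, so $\e{\wt\gamma_{\cM_s,\infty}}\le\sqrt{2\log(2p)}$ by the orthogonal-design bound recalled after Proposition~\ref{prop:boundkt} (the $\wt w_{M,i}$ are in fact axis vectors when $i$ is orthogonal to the rest, and in general lie in the convex hull / on the sphere of the $s$-sparse set, whose width is $\le\sqrt{2s\log(6p/s)}$ — but one must be a little careful and use that for the \emph{orthonormalized} problem the relevant set of directions actually has width $\sqrt{2\log(2p)}$, which is where the structure $\wt w_{M,i}=\tilde e_i$ for the orthogonalized columns is used). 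For the second term, bound $\e{\gamma_{\cM_s,\infty}'}\le\sqrt{2s\log(6p/s)}$ by Lemma~\ref{lm:simplecard} (the set $\Gamma'$ being contained in $s$-sparse unit vectors, up to the mild enlargement from including $\wt\Gamma$, which is covered by the same cardinality count). Combining, $\e{\gamma_{\cM_s,\infty}}\le\sqrt{2\log(2p)}+C\delta\sqrt{2s\log(6p/s)}$ with $C=2\sqrt{1+\delta}/(1-\delta)$.

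The main obstacle I anticipate is the second step — making the reduction to an orthonormal design both \emph{rigorous} and \emph{sharp in the constant}. The subtlety is that $w_{M,i}$ and $\wt w_{M,i}$ live a priori in different inner-product spaces ($\mbr^n$ with the Gram $G_M$ versus an abstract orthonormal model), so one needs a clean isometric embedding or, equivalently, to phrase everything intrinsically in terms of the coefficient vectors $c(M,i)$ and the quadratic form $G_M$. One must show $\norm{c(M,i)}$ stays controlled (it equals $\norm{v_{M,i}}/\norm{v_{M,i}}^2=1/\norm{v_{M,i}}=(G_M^{-1})_{i.M,i.M}^{-1/2}$, which is between $(1+\delta)^{-1/2}$... no: between $\sqrt{(1-\delta)_+}$ and $\sqrt{1+\delta}$ after inverting the RIP bounds on $G_M^{-1}$), and then propagate the $\delta$-perturbation of $G_M$ through the normalization map with exactly the right leading constant to land on $2\sqrt{1+\delta}/(1-\delta)$ rather than something looser. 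Once that per-vector $C\delta$ estimate is nailed, the width bookkeeping via Lemmas~\ref{lm:simplecard} and the orthogonal-design bound is routine.
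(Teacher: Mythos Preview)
Your proposal contains a genuine gap in the choice of comparison vectors $\wt w_{M,i}$. You define $\wt w_{M,i}$ as the unit vector proportional to $\sum_{j\in M} c_j \tilde e_j$ with the \emph{same} coefficient vector $c$ coming from the row of $G_M^{-1}$. With this definition, $\wt w_{M,i}$ is a generic $s$-sparse unit vector in the orthonormal system $(\tilde e_j)$; the Gaussian width of the set of all $s$-sparse unit vectors in $\mbr^p$ is of order $\sqrt{s\log(p/s)}$, not $\sqrt{2\log(2p)}$. Your parenthetical remark that ``the structure $\wt w_{M,i}=\tilde e_i$ for the orthogonalized columns is used'' is where the confusion lies: $\wt w_{M,i}=\tilde e_i$ only if $c=e_{i.M}$, i.e.\ if you use the coefficients from the \emph{orthonormal} Gram matrix $I$, not the coefficients from $G_M^{-1}$. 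So as written, either your first term is $\sqrt{2s\log(6p/s)}$ (no improvement), or you silently switch to a different $\wt w_{M,i}$ and then your step~2 perturbation estimate no longer applies.

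There is a second, related problem: $w_{M,i}\in\mbr^n$ and $\wt w_{M,i}$ live in different inner-product spaces, and the sentence ``$X_j=\tilde e_j+(\text{perturbation})$ encoded in $G_M-I$'' has no meaning without specifying an isometry; RIP controls $G_M$ but gives no canonical map from $X_j$ to any orthonormal frame.

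The fix is much simpler than your route and is what the paper does: do not introduce $\tilde e_j$ at all. Anchor $w_{M,i}$ directly to $X_i$, which is already a unit vector in $\mbr^n$ once columns are normalized. Writing $v_{M,i}=X_i+r_{M,i}$ with $r_{M,i}^t=(e_{i.M}^{|M|})^t(G_M^{-1}-I)X_M^t$, one gets $\norm{r_{M,i}}\le \delta\sqrt{1+\delta}/(1-\delta)$ from the RIP eigenvalue bounds, and then $\norm{w_{M,i}-X_i}\le 2\norm{r_{M,i}}$ by two triangle inequalities. The first term becomes $\e{\sup_i |X_i^t\xi|}\le\sqrt{2\log(2p)}$ purely by \emph{cardinality} (there are only $p$ vectors $X_i$, orthonormality is irrelevant here), and the second term is handled by Lemma~\ref{lm:simplecard} after renormalizing the differences. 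This yields the stated bound with the exact constant $2\sqrt{1+\delta}/(1-\delta)$ without any embedding gymnastics.
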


This upper bound is of the form
\[
U_{\textrm{RIP}}(p, s, \delta) =
U_{\textrm{orth}}(p)
+ 2 \delta c(\delta)
U_{\textrm{sparse}}(p, s),
\]
where:
\begin{itemize}
\item $U_{\textrm{orth}}(p)=\sqrt{2 \log (2p)}$ is the upper bound in
  the orthogonal case;
\item $U_{\textrm{sparse}}(p, s)$ is the right-hand side of \eqref{eq:simplecard} corresponding to the cardinality-based upper bound in the sparse case;
\item $c(\delta) = \sqrt{1+\delta}/(1-\delta)$ satisfies: $c(\delta) \geq 0$, $c(\delta) \to 1$ as $\delta \to 0$, and $c$ is increasing.
\end{itemize}
We observe that if $\delta \to 0$, our bound $U_{\textrm{RIP}}$ is  $o(U_{\textrm{sparse}})$. Moreover, when $\delta \sqrt{s} \sqrt{ 1 - \log s / \log p + 1 / \log p } \to 0$, then $U_{\textrm{RIP}}$ is even asymptotically equivalent to  $U_{\textrm{orth}}$. In particular, this is the case if $\delta \sqrt{s} \to 0$.

We now consider the specific case where $X$ is a subgaussian random matrix, that is, $X$ has independent subgaussian entries  \cite[Definition 9.1]{foucart2013mathematical}. We discuss in which situations $\delta = \delta(X,s) \to 0$. The estimate of $\kappa$ in \cite[Theorem 9.2]{foucart2013mathematical} combined with Lemma~\ref{lem:bar:delta:delta} yields
\begin{equation}
  \label{eq:RIP:ensemble:gaussian}
  \delta = O_P\left(\sqrt{s \log (ep/s)/n}\right),
\end{equation}
so that $\delta \to 0$ as soon as $n /(s \log (ep/s)) \to +\infty$.

\subsection{Comparison with upper bounds based on Euclidean norms}
\label{sec:comp-kuchibhotla}
We now compare our upper bound in Theorem~\ref{thm:rip-upper-bound} to upper bounds recently and independently obtained in \cite{kuchibhotla2018model}.
Recall the notation $Y$, $\mu$, $\beta_M$ and $\hat{\beta}_M$ from Section \ref{section:setting:notation} and let $r = \infty$ for simplicity of exposition.
The authors in \cite{kuchibhotla2018model} address the case where $X$ is random (random design) and
consider deviations of $\wh{\beta}_M$ to
$\bar\beta_M=\e{X_M^tX_M}^{-1}\e{X_M^tY}$, the population version of the regression coefficients $\beta_M$, assuming that the rows of $X$ are independent
random vectors in dimension $p$.
They derive uniform bounds over $M \in \cM_s$ for $\norm[1]{\bar\beta_M - \hat{\beta}_M}_2$.
They also consider briefly (Remark 4.3 in \cite{kuchibhotla2018model})
the fixed design case with $\beta_M=(X_M^tX_M)^{-1}X_M^t\mu$ as in the present paper. This target $\beta_M$ can be interpreted as the random design model conditional to $X$. They
assume that the individual coordinates of $X$ and $Y$ have exponential moments
bounded by a constant independently from $n,p$ (thus their setting is more general than
the Gaussian regression setting, but for the purpose of this discussion we assume Gaussian noise).

Let us additionally assume that the RIP property 
$\kappa(X/\sqrt{n},s)\leq \kappa$ is satisfied (on an event of probability tending to 1) and for $\kappa$ restricted to a compact of $[0,1)$ independently of $n,p$; note that we used the rescaling of $X$ by $\sqrt{n}$, which is natural in the random
design case.
Then some simple estimates obtained as a consequence of Theorems\footnote{The technical conditions assumed by \cite{kuchibhotla2018model} imply a slightly weaker version of the RIP property $\kappa(X/\sqrt{n},s)\leq \kappa<1$.} 3.1 and 4.1 in \cite{kuchibhotla2018model}
 lead to
\begin{equation}
  \label{eq:them}
  \sup_{M \in \cM_s}  \norm[1]{ \beta_M - \hat{\beta}_M}_2
   = O_P\left(\sigma 
     \sqrt{\frac{s \log(ep/s)}{n}}\right),
 \end{equation}
 as $p,n \rightarrow \infty$ and assuming $s\log^2{p} = o(n)$.
On our side, under the same assumptions we have that
\begin{align*}
\sup_{M \in \cM_s, i \in M}\left(\left(\frac{X_M^tX_M}{n}\right)^{-1}\right)_{i.M i.M}
\end{align*}
is bounded on an event of probability tending to 1. This leads to $\norm{v_{i.M}} = O_P(1/\sqrt{n})$ uniformly for all $M \in \cM_s, i \in M$.
Hence, from Theorem \ref{thm:rip-upper-bound}, \eqref{eq:CI}, \eqref{eq:coverage},
we obtain
\begin{align}
  \label{eq:us1}
  \sup_{M \in \cM_s} \norm[1]{  \beta_M - \hat{\beta}_M }_\infty = O_P\paren{
  \sigma\paren{\sqrt{\frac{\log (p)}{n}}
+ \delta  \sqrt{ \frac{s \log(ep / s) }{n}}}}.
\end{align}
Thus, if $\delta = \Omega(1)$, since the Euclidean norm upper bounds the supremum
norm, the results of  \cite{kuchibhotla2018model} imply ours (at least in the sense
of these asymptotic considerations). On the other hand,
in the case where $\delta\rightarrow 0$, which is the case we are specifically interested
in, we obtain a sharper bound (in the weaker supremum norm).

In particular, if $X$ is a subgaussian random matrix (as discussed in
the previous section),
due to~\eqref{eq:RIP:ensemble:gaussian} we obtain
\begin{align}
  \label{eq:us}
  \sup_{M \in \cM_s} \norm[1]{  \beta_M - \hat{\beta}_M }_\infty = O_P\left(\sigma\paren{\sqrt{\frac{\log(p)}{n}} + \frac{s \log(ep/s)}{n}} \right).
\end{align}
This improves over the estimate deduced from~\eqref{eq:them} as soon as
$s \log(ep/s) = o(n)$, which corresponds to the case where \eqref{eq:them} tends to $0$.
Conversely, in this situation our bound \eqref{eq:us} yields for the Euclidean norm
(using $\norm{w}_2 \leq \norm{w}_0 \norm{w}_\infty$):
\begin{align}
\label{eq:us-euclidean}
\sup_{M \in \cM_s} \norm[1]{\beta_M - \hat{\beta}_M }_2 = O_P\left(\sigma\paren{\sqrt{ \frac{s\log(p)}{n}} + \frac{s^{3/2} \log(ep/s)}{n}} \right) \,.
\end{align}
Assuming $s = O(p^\lambda)$ for some $\lambda < 1$ for ease of interpretation, we see that \eqref{eq:us-euclidean} is of the same order as  \eqref{eq:them} when  $s^2 \log(p) = O(n)$, and is of a strictly larger order otherwise. In this sense, it seems that \eqref{eq:us1} and \eqref{eq:them} are complementary to each other since we are using a weaker norm, but obtain a sharper bound in the case $\delta\rightarrow 0$.

\subsection{Applicability}
\label{sec:applicability}

While the main interest of our results is theoretical, we now discuss the applicability of our bound.
For any $\delta \geq \delta(X, s)$, Theorem~\ref{thm:rip-upper-bound} combined with Proposition~\ref{prop:boundkt} provides a bound of the form $\ol{U}_{\textrm{RIP}}(p, s, \delta) \geq K(X, \cM_s)$, with
\begin{equation*}
  \ol{U}_{\textrm{RIP}}(p, s, \delta) = T\paren{\sqrt{2 \log (2p)}
+ 2 \delta \left( \frac{\sqrt{ 1+\delta}}{1-\delta}  \right)
\sqrt{ 2s \log(6p / s) },r,1-\alpha/2}.
\end{equation*}
 This bound can be used in practice in situations where $\delta(X, s)$ (or an upper bound of it) can be computed, whereas $K(X, \cM_s)$ cannot because the number of inner products in \eqref{eq:gamma_Mr} is too large.  Indeed, for a given $\delta$, it is immediate to compute $\ol{U}_{\textrm{RIP}}(p, s, \delta)$.

\paragraph{Upper bounding the RIP constant.}
When $n \geq p$, we have $\delta(X, s) \leq  \delta(X, p)$ and $\delta(X, p)$ can be computed in practice for a given $X$. Specifically, $\delta(X, p)$  is the largest eigenvalue of $\mathrm{corr}(X^t X) - I_p$ in absolute value. When $X$ is a subgaussian random matrix,  $\delta(X, p) \sim \sqrt{p/n}$ \cite{bai2010spectral,marcenko1967distribution}. Thus, if $n$ is large enough compared to $p$, the computable upper bound $\ol{U}_{\textrm{RIP}}(p, s, \delta(X, p))$ will improve on the sparsity-based upper bound  $\ol{U}_{\textrm{sparse}}(p, s) = T(( 2s \log(6p / s) )^{1/2},r,1-\alpha/2) \geq K(X, \cM_s)$, see Proposition~\ref{prop:boundkt} and Lemma~\ref{lm:simplecard}.

On the other hand, when $n < p$, it is typically too costly to compute $\delta(X, s)$ (or an upper bound of it) for a large $p$. Nevertheless, if one knows that $X$ is a subgaussian random matrix, they can compute an upper bound $\tilde{\delta}$ satisfying  $\tilde{\delta} \geq \delta(X,s)$ with high probability, as in \cite[Chapter 9]{foucart2013mathematical}.
We remark that using the values of $\tilde{\delta}$ currently available in the literature, one would need $n$ to be very large for $\ol{U}_{\textrm{RIP}}(p, s, \tilde{\delta})$ to improve on $\ol{U}_{\textrm{sparse}}(p, s)$.

\paragraph{Alternative  upper bound on the PoSI constant.}
For any $\delta \geq \delta(X,s)$, we now show how to compute an alternative bound of the form $ \tilde{U}_{\textrm{RIP}}(p, s, \delta) \geq K(X, \cM_s)$.  Our numerical experiments suggest that this alternative bound is generally sharper than $\overline{U}_{\textrm{RIP}}(p, s, \delta)$.
For $q,r, \rho \in \mathbb{N}$ and $\ell \in (0,1)$, let $B_{\ell}(q,r,\rho)$ be defined as the smallest $t >0$ so that
\[
\cH_{q,\rho}(t) := \mathbb{E}_G \left( \min\left(1, \rho \left[  1 - F_{\mathrm{Beta},1/2,(q-1)/2} (t^2/G^2) \right] \right) \right) \leq \ell,
\]
where $G^2/q$ follows a Fisher distribution with $q$ and $r$ degrees of freedom, 
and $F_{\mathrm{Beta},a,b}$ denotes the cumulative distribution function of the Beta$(a,b)$ distribution. In the case $r = +\infty$, $B_\ell$ is also defined and further described in \cite[Section 2.5.2]{bachoc2016uniformly}.

It can be seen from the proof of Theorem~\ref{thm:rip-upper-bound} (see specifically \eqref{eq:for-B-alpha} which also holds without the expectation operators),
and from the arguments in \cite{bachoc14valid}, that we have
\begin{equation*}
  K(X, \cM_s, \alpha) \leq B_{t \alpha}(n \wedge p, r, p) +  2\delta  c(\delta) B_{(1-t)\alpha}(n \wedge p, r , \abs{\cM_s})
\end{equation*}
for any $t \in (0,1)$.
This upper bound can be minimized with respect to $t$, yielding $\tilde{U}_{\textrm{RIP}}(p, s, \delta)$.

The quantity $B_\ell(q,r, \rho)$ can be easily approximated numerically, as it is simply the quantile of the tail distribution $\cH_{q,\rho}$, which only involves standard distributions.  Algorithm E.3 in the supplementary materials of \cite{bachoc14valid} can be used to compute $B_\ell(q,r,\rho)$. An implementation of this algorithm in R~\cite{R} is available in Appendix \ref{apx:code}. Hence, the upper bound $\tilde{U}_{\textrm{RIP}}(p, s, \delta)$ can be computed for large values of $p$ for a given $\delta$.

\section{Lower bound}
\label{sec:lower-bound}

\subsection{Equi-correlated design matrices}
\label{sec:equi-corr-design}

The goal of this section is to find a matching lower bound for Theorem~\ref{thm:rip-upper-bound}.
For this we extend ideas of \cite[Example 6.2]{Berk13} and, following that reference, we
restrict our study to design matrices $X$ for which $n \geq p$. The lower bound is based on the $p \times p$ matrix $Z^{(c,k)}= (e_1^p,e_2^p,\ldots,e_{p-1}^p,x_k(c))$, where \[
x_k(c)=(\underbrace{c,c, \dots c}_{k}, \underbrace{0, 0, \dots 0}_{p-1-k}, \underbrace{\sqrt{1-kc^2}}_1)^t,\]
where we assume $k<p$, and
the constant $c$ satisfies $c^2<1/k$, so that $Z^{(c,k)}$ has full rank. By definition, the correlation between any of the first $k$ columns of $Z^{(c,k)}$ and the last one is $c$, and $Z^{(c,k)}$ restricted to its first $p-1$ columns is the identity matrix $I_{p-1}$. The case where $k=p-1$ is studied in \cite[Example 6.2]{Berk13}: Theorem 6.2 in \cite{Berk13} implies that the PoSI constant $K(X,\cM)$, where $X$ is a $n \times p$ matrix such that $X^t X = (Z^{(c,k)})^t Z^{(c,k)}$, is of the order of $\sqrt{p}$ when $k=p-1$ and $\cM = \cM_{all} $.
The Gram matrix of  $Z^{(c,k)}$  is the $3 \times 3$ block matrix with sizes $(k, p-k-1, 1) \times (k, p-k-1, 1)$ defined by

\begin{equation}
\label{eq:gram-xck}
(Z^{(c,k)})^t Z^{(c,k)}
 =
\begin{bmatrix}
    I_{k} &  [0] & [c] \\
    [0] & I_{p-k-1} & [0]\\
    [c] & [0] & 1 \\
\end{bmatrix},
\end{equation}
where $[a]$ means that all the entries of the corresponding block are identical to $a$.
We begin by studying the RIP coefficient $\delta(X,s)$
for design matrices $X$ yielding the Gram matrix \eqref{eq:gram-xck}. Since this Gram matrix has full
rank $p$, there exists a design matrix satisfying this condition if and only if $n \geq p$.

\begin{lemma}\label{lm:rip-constant}
  Let $X$ be a $n \times p$ matrix for which $X^t X$ is given by \eqref{eq:gram-xck} with $kc^2 < 1$. Then for $s \leq k \leq p-1$,  we have $\kappa(X,s) = \delta(X,s)\leq c\sqrt{s-1}$.
\end{lemma}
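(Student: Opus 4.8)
The plan is to compute (or tightly bound) the operator norm of $\mathrm{corr}(X_M^t X_M) - I_{|M|}$ uniformly over all $M$ with $|M| \le s$, using the explicit block structure of the Gram matrix \eqref{eq:gram-xck}. First I would observe that for any $M \subset \{1,\ldots,p\}$, the submatrix $X_M^t X_M$ is itself the Gram matrix of a subset of the columns of $Z^{(c,k)}$; since the first $p-1$ columns are orthonormal and only the last column $x_k(c)$ has off-diagonal correlations (each equal to $c$, and only with the first $k$ columns), the matrix $X_M^t X_M$ is already a correlation matrix, so $\mathrm{corr}(X_M^t X_M) = X_M^t X_M$ and $\delta(X,s)=\kappa(X,s)$ on the nose. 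Then there are two cases: either $p \notin M$, in which case $X_M^t X_M = I_{|M|}$ and the contribution is zero; or $p \in M$, in which case writing $j = |M \cap \{1,\ldots,k\}| \le s-1$, the matrix $X_M^t X_M - I_{|M|}$ is (after permuting rows/columns) supported on a $(j+1)\times(j+1)$ block of the form $c\,(e_{j+1} \mathbf{1}_j^t + \mathbf{1}_j e_{j+1}^t)$ padded by zeros, where $\mathbf{1}_j \in \mathbb{R}^j$ is the all-ones vector.

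The key computation is the operator norm of the matrix $A_j := c(e_{j+1}\mathbf{1}_j^t + \mathbf{1}_j e_{j+1}^t)$ on $\mathbb{R}^{j+1}$. This is a symmetric rank-two matrix; its nonzero eigenvalues are $\pm c\|\mathbf{1}_j\| = \pm c\sqrt{j}$, which one sees by checking that eigenvectors have the form $(\lambda \mathbf{1}_j^t, \pm\sqrt{j})^t$ up to scaling, or simply by noting $A_j^2 = c^2(\|\mathbf{1}_j\|^2 e_{j+1}e_{j+1}^t + \mathbf{1}_j\mathbf{1}_j^t) + (\text{cross terms that vanish appropriately})$ — more cleanly, restrict to the two-dimensional span of $e_{j+1}$ and $\mathbf{1}_j/\sqrt j$, on which $A_j$ acts as $c\sqrt j$ times the $2\times 2$ swap matrix $\begin{bmatrix}0&1\\1&0\end{bmatrix}$, whose eigenvalues are $\pm 1$. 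Hence $\|A_j\|_{op} = c\sqrt{j}$. Maximizing over $j \le s-1$ gives $\kappa(X,s) = c\sqrt{s-1}$, and combined with $\delta(X,s) = \kappa(X,s)$ this yields the claim; in fact the bound $\delta(X,s) \le c\sqrt{s-1}$ is attained with equality whenever $s-1 \le k$, which is guaranteed by the hypothesis $s \le k$.

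I would also double-check the degenerate constraint: we need $kc^2 < 1$ only to ensure $Z^{(c,k)}$ has full rank (so that a valid design matrix $X$ exists), and this does not otherwise enter the eigenvalue computation; the argument above only uses that $c^2 < 1/k$ implicitly through the well-definedness of $x_k(c)$. The main obstacle, though a minor one, is bookkeeping: carefully justifying the reduction to the $(j+1)\times(j+1)$ block (i.e.\ that appending orthonormal columns not in the first $k$ indices, or dropping any of the first $k$ columns, only changes $j$ and the ambient dimension but not the operator norm, since padding a matrix with zero rows and columns preserves $\|\cdot\|_{op}$), and confirming that $\mathrm{diag}(X_M^tX_M) = I_{|M|}$ so the $\mathrm{corr}$ operation is trivial. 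Once that reduction is in place, the spectral computation of $A_j$ is the only real content, and it is a two-line rank-two argument.
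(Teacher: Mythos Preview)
Your proposal is correct and follows essentially the same approach as the paper: both arguments observe that the columns are unit norm so $\delta(X,s)=\kappa(X,s)$, split into the cases $p\notin M$ (trivial) and $p\in M$, identify the key parameter $j=|M\cap\{1,\ldots,k\}|\le s-1$, and reduce to computing the operator norm of the resulting off-diagonal block. The only cosmetic difference is the eigenvalue computation itself: the paper squares the matrix $U_{s,m}$ and reads off the eigenvalue $m$ from the resulting block-diagonal structure, whereas you use a rank-two argument via the 2-dimensional invariant subspace spanned by $e_{j+1}$ and $\mathbf{1}_j/\sqrt{j}$; both yield $\|G_M-I_{|M|}\|_{op}=c\sqrt{j}$ immediately.
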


\subsection{A matching lower bound}

In the following proposition, we provide a lower bound of $K(X , \cM_s)$ for matrices $X$  yielding the Gram matrix \eqref{eq:gram-xck}.

\begin{proposition}\label{prop:matching-lower-bound}
  For any $s \leq k < p$, $c^2 < 1/k$ and $\alpha \leq \frac{1}{2}$, let $X$ be a $n \times p$ matrix for which $X^t X$ is given by \eqref{eq:gram-xck} with $kc^2 < 1$. We have
  \[
    K(X, \cM_s,\alpha, \infty)
    \geq A\frac{c(s-1)}{\sqrt{ 1 - (s-1)c^2 }}  \sqrt{ \log  \lfloor k/s \rfloor } - \sqrt{2\log 2},
  \]
  where $A>0$ is a universal constant.
\end{proposition}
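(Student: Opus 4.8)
The plan is to lower bound the $(1-\alpha)$-quantile $K(X,\cM_s,\alpha,\infty)$ of $\gamma_{\cM_s,\infty} = \max_{M\in\cM_s, i\in M}|w_{M,i}^t\xi|$ by exhibiting an explicit large sub-family of models/covariate pairs for which the normalized vectors $w_{M,i}$ are nearly orthogonal, so that the corresponding maximum of $|w_{M,i}^t\xi|$ behaves like a maximum of roughly $\lfloor k/s\rfloor$ independent Gaussians, which concentrates around $\sqrt{2\log\lfloor k/s\rfloor}$ times the common scale. First I would compute, for this particular Gram matrix, the vector $v_{M,i}$ and its norm when $M = \{j_1,\dots,j_{s-1}\}\cup\{p\}$ consists of $s-1$ of the ``first $k$'' indices together with the last index $p$, and $i = p$ is the last covariate. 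By the residual interpretation recalled after Definition~\ref{def:Kposi}, $v_{M,p}/\|v_{M,p}\|^2$ is the component of $X_p$ orthogonal to $\mathrm{Span}\{X_{j_1},\dots,X_{j_{s-1}}\}$; since the first $k$ columns are orthonormal and each has inner product $c$ with $X_p$, this residual has squared norm $1 - (s-1)c^2$, so $\|v_{M,p}\| = 1/\sqrt{1-(s-1)c^2}$ and $w_{M,p} = (X_p - c\sum_{\ell=1}^{s-1} X_{j_\ell})/\sqrt{1-(s-1)c^2}$.

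Next I would partition a set of $k$ ``first-block'' indices into $q := \lfloor k/s\rfloor$ disjoint blocks $B_1,\dots,B_q$ each of size $s-1$ (possible since $q(s-1)\le k$), and for each $m$ form the model $M_m = B_m\cup\{p\}\in\cM_s$ with the distinguished covariate $i=p$. The associated $w_{M_m,p}$ all have the same coefficient $1/\sqrt{1-(s-1)c^2}$ on $X_p$ and disjoint supports on the orthonormal block columns; a direct computation of $w_{M_m,p}^t w_{M_{m'},p}$ for $m\ne m'$ gives exactly $\rho := c^2(s-1)/(1-(s-1)c^2)$, independently of $m,m'$. Writing $g_m = w_{M_m,p}^t\xi$, the $(g_m)_{m=1}^q$ are jointly centered Gaussian with unit variance and pairwise correlation $\rho$, so $g_m = \sqrt{\rho}\,g_0 + \sqrt{1-\rho}\,h_m$ with $g_0, h_1,\dots,h_q$ i.i.d.\ standard normal. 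Then
\[
K(X,\cM_s,\alpha,\infty) \ge \text{($1-\alpha$)-quantile of } \max_{1\le m\le q} |g_m| \ge \sqrt{\rho}\cdot 0 \; \text{(symmetrizing)} \; + \sqrt{1-\rho}\cdot\big(\text{quantile of }\max_m |h_m|\big),
\]
and more carefully: conditioning on $g_0$, on the event $\{g_0\ge 0\}$ (probability $1/2$) one has $\max_m g_m \ge \sqrt{1-\rho}\max_m h_m$, so the $(1-\alpha)$-quantile of $\gamma_{\cM_s,\infty}$ is at least the $(1-2\alpha)$-quantile of $\sqrt{1-\rho}\max_m h_m$ (using $\alpha\le 1/2$), which by a standard lower bound on the quantile of the maximum of $q$ i.i.d.\ standard Gaussians is at least $\sqrt{1-\rho}\big(A_0\sqrt{\log q} - \sqrt{2\log 2}/\sqrt{1-\rho}\big)$ for a universal constant $A_0$; a cleaner route is to use the expectation lower bound $\mbe[\max_m h_m] \ge A_0\sqrt{\log q}$ together with Gaussian concentration of the maximum around its mean (as in Appendix~\ref{sec:gauss-conc}) to pass to the quantile, absorbing the $-\sqrt{2\log 2}$ additive term. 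Finally, substituting $\sqrt{1-\rho} = \sqrt{1-(s-1)c^2}$ so that $\sqrt{\rho}/\sqrt{1-\rho} \cdot \sqrt{1-\rho} = \sqrt{\rho(1-\rho)}\cdot\frac{1}{\sqrt{1-\rho}}$... more directly, $\sqrt{1-\rho}\cdot\sqrt{\rho}/\ ... $; rewriting the leading term, $\sqrt{1-\rho}\cdot\frac{c\sqrt{s-1}}{\sqrt{1-(s-1)c^2}}$ is not quite it — rather the claimed coefficient $\frac{c(s-1)}{\sqrt{1-(s-1)c^2}}$ should emerge from a sharper choice, namely comparing $\max_m g_m$ to the full-correlation structure rather than only the $h_m$ part; I expect to instead bound $\mbe[\max_m g_m]$ from below using Slepian/Sudakov–Fernique against a reference Gaussian field whose increments match, giving $\mbe[\max_m g_m]\gtrsim \sqrt{2(1-\rho)\log q}$ and then the coefficient $\sqrt{2(1-\rho)} = \sqrt{2}\,\sqrt{1-(s-1)c^2}$ — so to land exactly on $c(s-1)/\sqrt{1-(s-1)c^2}$ one must additionally use a second, ``spread-out'' family of models (e.g.\ taking $i$ to range over block elements, using the $[c]$ structure to create larger mean-shift effects). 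I would reconcile the exact constant by following \cite[Example 6.2]{Berk13} closely, which treats the $s=p-1$ case and whose argument extends block-wise.

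The main obstacle is twofold: (i) getting the \emph{exact} scaling factor $c(s-1)/\sqrt{1-(s-1)c^2}$ rather than merely $c\sqrt{s-1}/\sqrt{1-(s-1)c^2}$, which requires exploiting that within each block the $s-1$ covariates all share correlation $c$ with $X_p$ so that a well-chosen linear functional of $\xi$ picks up an order-$(s-1)$ rather than order-$\sqrt{s-1}$ effect — this is exactly the mechanism making the equi-correlated design extremal in \cite{Berk13}; and (ii) converting an expectation lower bound on the maximum of correlated Gaussians into a quantile lower bound with only an additive $\sqrt{2\log 2}$ loss, which I would handle by the Borell–TIS / Gaussian concentration inequality (the variance proxy of $\gamma$ being $1$, since each $w$ is a unit vector), noting that the $-\sqrt{2\log 2}$ in the statement is precisely the kind of slack such a concentration argument produces. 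Everything else — the computation of $v_{M,i}$, $\|v_{M,i}\|$, the pairwise correlations $\rho$, and the block partition count $\lfloor k/s\rfloor$ — is routine linear algebra on the explicit Gram matrix \eqref{eq:gram-xck}.
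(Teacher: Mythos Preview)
Your computation of $w_{M,p}$ is correct, and so is the passage from expectation to quantile via Gaussian concentration (this is exactly how the $-\sqrt{2\log 2}$ arises). But the central construction has a genuine gap, which you partly sensed yourself. First, the pairwise correlation is miscomputed: for $m\ne m'$ with disjoint blocks one gets
\[
w_{M_m,p}^t w_{M_{m'},p} \;=\; \frac{1-2(s-1)c^2}{1-(s-1)c^2} \;=\; 1-\rho,\qquad \rho=\frac{(s-1)c^2}{1-(s-1)c^2},
\]
so the $g_m$'s are \emph{highly} correlated (close to~$1$), not nearly independent. With the correct decomposition $g_m=\sqrt{1-\rho}\,g_0+\sqrt{\rho}\,h_m$ one obtains $\e{\max_m g_m}\lesssim \sqrt{\rho}\sqrt{\log q}$, i.e.\ only the order $\dfrac{c\sqrt{s-1}}{\sqrt{1-(s-1)c^2}}\sqrt{\log\lfloor k/s\rfloor}$. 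No Slepian/Sudakov comparison and no ``second family'' can upgrade $\sqrt{s-1}$ to $(s-1)$ here: a fixed collection of $q$ unit vectors yields a maximum whose expectation is at most $\sqrt{2\log q}$, so the prefactor you extract is capped at the standard deviation of a single $g_m$, which is $1$.

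The missing idea is to optimize over \emph{all} $\binom{k}{s-1}$ models $M=\{j_1,\dots,j_{s-1},p\}$ simultaneously rather than over $q$ pre-chosen ones. For any such $M$,
\[
w_{M,p}^t\xi \;=\; \frac{1}{\sqrt{1-(s-1)c^2}}\Big(\sqrt{1-kc^2}\,\xi_p + c\sum_{j\le k,\; j\notin M}\xi_j\Big),
\]
so maximizing over $M$ means placing into $M$ the $s-1$ indices with the \emph{smallest} $\xi_j$; the inner sum then becomes the sum of the top $k-s+1$ order statistics of $\xi_1,\dots,\xi_k$. Since $\e{\sum_{j\le k}\xi_j}=0$, its expectation equals $\e{\sum_{j=1}^{s-1}\xi_{(k-j+1):k}}$, the expected sum of the top $s-1$ order statistics. \emph{This} is where the factor $(s-1)$ appears: partition $\{1,\dots,k\}$ into $s-1$ disjoint blocks of size $\lfloor k/s\rfloor$, note that the sum of the top $s-1$ order statistics dominates the sum of the $s-1$ block maxima (they are $s-1$ distinct elements), and lower-bound each block maximum expectation by $A\sqrt{\log\lfloor k/s\rfloor}$. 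Thus the blocks enter only at the very end, as a device to lower-bound a sum of order statistics, not as a family of models.
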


From the previous lemma, we now show that the upper bound of Theorem \ref{thm:rip-upper-bound} is optimal (up to a multiplicative constant) for a large range of behavior of $s$ and $\delta$ relatively to $p$. As discussed after Theorem \ref{thm:rip-upper-bound}, in the case where
$\delta \sqrt{s} \sqrt{ 1 - \log s / \log p + 1/ \log p } = O(1)$,
the upper bound we obtain is optimal, since it can be written as $O( \sqrt{ \log p } )$. In the next Corollary, we show that the upper bound of  Theorem \ref{thm:rip-upper-bound} is also optimal when $\delta \sqrt{s} \sqrt{ 1 - \log s / \log p + 1/ \log p }$ tends to $+\infty$,
and when $\delta = O(p^{-\lambda})$ for some $\lambda>0$.

\begin{corollary}[Optimality of the RIP-PoSI bound]
\label{cor:optimality:bound}
Let $(s_p, \delta_p)_{p \geq 0}$ be sequences of values such that $s_p < p$, $\delta_p >0$, $\delta_p \to 0$ and satisfying:
$$\lim_{p \to \infty} \delta_p \sqrt{s_p} \sqrt{ 1 - \log s_p / \log p + 1 / \log p} = + \infty.$$
Then Theorem \ref{thm:rip-upper-bound} implies

\begin{equation} \label{eq:2}
  \sup_{\substack{n \in \mathbb{N} \\ s\leq s_p,  X \in \mbr^{n \times p} \\ \mbox{s.t. } \delta(X, s) \leq  \delta_p}}  K(X, \cM_{s_p})
  \leq
  B  \delta_p \sqrt{s_p} \sqrt{\log(6p/s_p)},
\end{equation}
where $B$ is a constant.
Moreover, there exists a sequence of design matrices $X_p$  such that $\delta(X_p, s_p) \leq \delta_p$ and
\begin{equation}
  \label{eq:1}
  K(X_p, \cM_{s_p}) \geq   A \delta_p \sqrt{s_p} \sqrt{
  \log
  \left(
  \min( 1/\delta_p^2 , \lfloor (p-1)/s_p \rfloor)
\right)
  } \,,
\end{equation}
where $A$ is a constant.

  In particular, if  $\delta_p = O(p^{-\lambda})$ for some $\lambda>0$ and if $\lfloor (p-1)/s_p \rfloor \geq 2$, then the above upper and lower bounds have the same rate.
\end{corollary}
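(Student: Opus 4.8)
The plan is to prove the upper bound \eqref{eq:2} and the lower bound \eqref{eq:1} separately, and then combine them to obtain the matching-rate statement.

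\emph{Upper bound.} Since $s\mapsto\delta(X,s)$ is nondecreasing, the constraint defining the supremum in \eqref{eq:2} is equivalent to $\delta(X,s_p)\le\delta_p$. For such $X$, Theorem~\ref{thm:rip-upper-bound} applied with sparsity $s_p$ (using that $\delta\mapsto 2\delta\sqrt{1+\delta}/(1-\delta)$ is nondecreasing) combined with Proposition~\ref{prop:boundkt} at $r=\infty$ gives
\[
K(X,\cM_{s_p}) \;\le\; \sqrt{2\log(2p)} \;+\; 2\delta_p\,\frac{\sqrt{1+\delta_p}}{1-\delta_p}\,\sqrt{2 s_p\log(6p/s_p)} \;+\; c_\alpha,
\]
where $c_\alpha$ is the $(1-\alpha/2)$-quantile of the standard Gaussian. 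I would first extract two easy consequences of the hypothesis: the factor $1-\log s_p/\log p+1/\log p$ is bounded, so $\delta_p\sqrt{s_p}\to\infty$; and since $\log(6p/s_p)\ge\log p\cdot(1-\log s_p/\log p+1/\log p)$, also $\delta_p\sqrt{s_p}\sqrt{\log(6p/s_p)}\ge\sqrt{\log p}\cdot\delta_p\sqrt{s_p}\sqrt{1-\log s_p/\log p+1/\log p}\to\infty$, so in particular it dominates $\sqrt{\log p}$. Hence for $p$ large the first and third terms above are $o(\delta_p\sqrt{s_p}\sqrt{\log(6p/s_p)})$ and the middle one is $O(\delta_p\sqrt{s_p}\sqrt{\log(6p/s_p)})$ (as $\sqrt{1+\delta_p}/(1-\delta_p)$ is bounded). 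Taking the supremum over $X$, which is finite for each fixed $p$ because $\gamma_{\cM_{s_p},\infty}$ is the maximum of finitely many standard Gaussians in absolute value (a bound independent of $n$), and enlarging $B$ to cover the finitely many small $p$, proves \eqref{eq:2}.

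\emph{Lower bound.} Here I would use the equi-correlated family of Section~\ref{sec:equi-corr-design}. The hypothesis also forces $s_p\to\infty$, so for $p$ large set $c_p=\delta_p/\sqrt{s_p}$ and $k_p=\min(p-1,\lfloor s_p/(2\delta_p^2)\rfloor)$, and let $X_p$ have Gram matrix $(Z^{(c_p,k_p)})^tZ^{(c_p,k_p)}$ as in \eqref{eq:gram-xck} (set $X_p=I_p$ for the remaining $p$, where \eqref{eq:1} is vacuous). Using $\delta_p\to0$ and $s_p\to\infty$ one checks $s_p\le k_p\le p-1$ and $c_p^2<1/k_p$, so $X_p$ is well-defined and Proposition~\ref{prop:matching-lower-bound} applies with $(s,c,k)=(s_p,c_p,k_p)$; Lemma~\ref{lm:rip-constant} gives $\delta(X_p,s_p)\le c_p\sqrt{s_p-1}<\delta_p$. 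In the resulting bound, $c_p(s_p-1)\ge\tfrac12\delta_p\sqrt{s_p}$ and $1-(s_p-1)c_p^2\ge1-\delta_p^2\ge\tfrac12$, so everything reduces to lower-bounding $\lfloor k_p/s_p\rfloor$. A short case split on which of $\lfloor s_p/(2\delta_p^2)\rfloor$ and $p-1$ attains the minimum yields $\lfloor k_p/s_p\rfloor\ge\tfrac14 m_p$ with $m_p:=\min(1/\delta_p^2,\lfloor(p-1)/s_p\rfloor)$, and then (for $p$ large, since $\lfloor k_p/s_p\rfloor\ge 1/(4\delta_p^2)\to\infty$ on the first branch and $\lfloor k_p/s_p\rfloor=m_p$ on the second) $\sqrt{\log\lfloor k_p/s_p\rfloor}\ge\tfrac1{\sqrt2}\sqrt{\log m_p}$. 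Substituting and absorbing the additive $-\sqrt{2\log2}$, which is possible because $\delta_p\sqrt{s_p}\sqrt{\log m_p}\to\infty$ whenever $m_p\ge2$ (and \eqref{eq:1} is trivial when $m_p=1$, as its right-hand side is then $0$), gives \eqref{eq:1}.

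\emph{Matching rate.} The matrix $X_p$ above lies in the supremum of \eqref{eq:2}, so chaining \eqref{eq:1} with \eqref{eq:2} reduces the claim to $\log m_p\ge c\log(6p/s_p)$ for some $c>0$. Under $\delta_p=O(p^{-\lambda})$ one has $\log(1/\delta_p^2)\ge2\lambda\log p-O(1)$, which is at least $c\log(6p/s_p)$ since $\log(6p/s_p)\le\tfrac32\log p$ for $p$ large; under $\lfloor(p-1)/s_p\rfloor\ge2$ one has $\lfloor(p-1)/s_p\rfloor\ge p/(4s_p)$, hence $\log\lfloor(p-1)/s_p\rfloor\ge\log(6p/s_p)-\log24$, which is again at least $c\log(6p/s_p)$ (treating separately the regime where $\log(6p/s_p)$ stays bounded, using $\lfloor(p-1)/s_p\rfloor\ge2$ directly). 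Taking the minimum shows both \eqref{eq:2} and \eqref{eq:1} are of order $\delta_p\sqrt{s_p}\sqrt{\log(6p/s_p)}$. The main obstacle is the lower bound: one must exploit the freedom in the equi-correlated construction to choose $k_p$ that is simultaneously small enough for the full-rank constraint $c_p^2k_p<1$ (tight, because the required RIP control forces $c_p\asymp\delta_p/\sqrt{s_p}$) and large enough, $k_p\gtrsim\min(s_p/\delta_p^2,\,p)$, for $\sqrt{\log\lfloor k_p/s_p\rfloor}$ to reach the target rate $\sqrt{\log m_p}$; everything else is routine bookkeeping with logarithms, using $\delta_p\sqrt{s_p}\to\infty$ to discard lower-order additive terms.
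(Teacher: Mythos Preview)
Your proof is correct and follows essentially the same approach as the paper: the upper bound comes from observing that the first term of Theorem~\ref{thm:rip-upper-bound} is negligible under the hypothesis, and the lower bound from the equi-correlated construction of Section~\ref{sec:equi-corr-design} with appropriately chosen $(c_p,k_p)$. Your parameter choices ($c_p=\delta_p/\sqrt{s_p}$, $k_p=\min(p-1,\lfloor s_p/(2\delta_p^2)\rfloor)$) differ only cosmetically from the paper's ($c_p=\delta_p/\sqrt{s_p-1}$, $k_p=\min(p-1,\lfloor 1/c_p^2-1\rfloor)$), and you supply more detail on the matching-rate claim, which the paper leaves to the reader.
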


Therefore, the upper bound in Theorem~\ref{thm:rip-upper-bound} is optimal in most configurations of $s_p$ and $\delta_p$, except if $\delta_p$ goes to $0$ slower than any inverse power of $p$.

\section{Concluding remarks}
\label{sec:concluding-remarks}

In this paper, we have proposed an upper bound on PoSI constants in $s$-sparse situations where the $n \times p$ design matrix $X$ satisfies a RIP condition. As the value of the RIP constant $\delta$ increases from $0$, this upper bound provides an interpolation between the case of an orthogonal $X$  and an existing upper bound only based on sparsity and cardinality. We have shown that our upper bound is asymptotically optimal for many configurations of $(s, \delta, p)$ by giving a matching lower bound. In the case of random design matrices with independent entries, since $\delta$ decreases with $n$, our upper bound compares increasingly more favorably to the cardinality-based upper bound as $n$ gets larger. It is also complementary to the bounds recently proposed in \cite{kuchibhotla2018model}. The interest and various applications of the RIP property
are well-known in the high-dimensional statistics literature, in particular for statistical risk analysis or support recovery. Our analysis puts into light an additional interest of the RIP property for agnostic post-selection inference (uncertainty quantification).

The PoSI constant corresponds to confidence intervals on $\beta_M$ in~\eqref{eq:beta-M}. In section~\ref{sec:comp-kuchibhotla} we also mention another target of interest in the case of random $X$,
$\bar\beta_M=\e{X_M^tX_M}^{-1}\e{X_M^tY}$. This quantity depends on the distribution of $X$ rather than on its realization, which is a desirable property as discussed in \cite{bachoc14valid,kuchibhotla2018model} where the same target has also been considered. In \cite{bachoc14valid}, it is shown that valid confidence intervals for $\beta_M$ are also asymptotically valid for $\bar\beta_M$, provided that $p$ is fixed. These results require that $\mu$ belongs to the column space of $X$ and hold for models $M$ such that $\mu$ is close to the column space of $X_M$. It would be interesting to study whether assuming RIP conditions on $X$ enables to alleviate these assumptions.

The purpose of post-selection inference based on the PoSI constant $K(X,\cM)$ is to achieve the coverage guarantee \eqref{eq:coverage}. The guarantee \eqref{eq:coverage} implies that, for any model selection procedure $\hat{M}: \mathbb{R}^n \to \cM$, with probability larger than $1 - \alpha$,
for all $i \in \hat{M}$, $(\hat{M})_{i.\hat{M}} \in \mathrm{CI}_{i,\hat{M}}$. Hence, there is in general no need to make assumptions about the model selection procedure when using PoSI constants. On the other hand, the RIP condition that we study here is naturally associated to specific model selection procedures, namely the lasso  or the Dantzig selector \cite{candes2007dantzig,candes2005decoding,van2009conditions,zhao2006model}.
Hence, it is natural to ask whether the results in this paper could help post-selection inference specifically for such procedures. We believe that the answer could be positive in some situations.
Indeed, if the lasso model selector is used in conjunction with a design matrix $X$ satisfying a RIP property, then asymptotic guarantees exist on the sparsity of the selected model \cite{buhlmann2011statistics}. Thus, one could investigate the combination of bounds on the size of selected models (of the form $|\hat{M}| \leq S$ and holding with high probability) with our upper bound, by replacing $s$ by $S$.

 In the case of the lasso model selector, we have referred, in the introduction section, to the post-selection intervals achieving conditional coverage \cite{lee15exact}, specifically for the lasso model selector.
These intervals are simple to compute (when the conditioning is on the signs, see \cite{lee15exact}).
Generally speaking, in comparison with confidence intervals based on PoSI constants, the confidence intervals of \cite{lee15exact} have the benefit of guaranteeing a coverage level conditionally on the selected model.
On the other hand the confidence intervals in \cite{lee15exact} can be large, and can provide small coverage rates when the regularization parameter of the lasso is data-dependent \cite{bachoc14valid}. It would be interesting to study whether these general conclusions would be modified in the special case of design matrices satisfying RIP properties.

Finally, the focus of this paper is on PoSI constants in the context of linear regression. Recently, \cite{bachoc2016uniformly} extended the PoSI approach to more general settings (for instance
generalized linear models), provided a joint asymptotic normality property holds between model dependent targets and estimators. This extension was suggested in the case of asymptotics for fixed dimension and fixed number of models.
In the high-dimensional case, an interesting direction would be to apply the results of \cite{chernozhukov2013gaussian}, that provide Gaussian approximations for maxima of sums of high-dimensional random vectors. This opens the perspective of applying our results to various
high-dimensional post model selection settings, beyond linear regression.

\section*{Acknowledgements}

This work has been supported by ANR-16-CE40-0019 (SansSouci).
The second author acknowledges the support from the german DFG, under the
Research Unit FOR-1735 ``Structural Inference in Statistics - Adaptation and
Efficiency'', and under the Collaborative Research Center SFB-1294 ``Data Assimilation''.

\section*{Appendix}
\appendix

\section{Gaussian concentration}
\label{sec:gauss-conc}

To relate the expectation of a supremum of Gaussian variables to its quantiles, we use the following classical Gaussian concentration inequality \cite{CIS76} (see e.g. \cite{Giraud15}, Section B.2.2. for
a short exposition):
\begin{theorem}[Cirel'son, Ibragimov, Sudakov]
  \label{thm:gausconc}
  Assume that $F:\mbr^d \rightarrow \mbr$ is a 1-Lipschitz function (w.r.t. the Euclidean norm of its input) and $Z$ follows the $\cN(0,\sigma^2 I_d)$ distribution. Then, there exists two one-dimensional standard Gaussian variables $\zeta,\zeta'$
  such that
  \begin{equation}
    \label{eq:gaussconc}
    \e{F(Z)} - \sigma \abs{\zeta'} \leq F(Z) \leq \e{F(Z)} + \sigma \abs{\zeta}.
  \end{equation}
\end{theorem}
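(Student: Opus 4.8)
The plan is to split \eqref{eq:gaussconc} into two one-sided statements and to realise each of them as an almost sure inequality through a coupling. Set $W := F(Z) - \e{F(Z)}$. Since $\sigma\abs{\zeta} \ge 0$, the right-hand inequality $F(Z) \le \e{F(Z)} + \sigma\abs{\zeta}$ binds only on $\set{W > 0}$ and is therefore equivalent to the stochastic domination $W_+ \preceq \sigma\abs{\zeta}$, where $W_+ = \max(W,0)$; likewise the left-hand inequality is equivalent to $W_- \preceq \sigma\abs{\zeta'}$. By the standard characterisation of stochastic order on $\mbr$ (Strassen's theorem, realised by the quantile transform) each domination can be turned into a pointwise inequality after a suitable coupling, so the whole proof reduces to the two tail bounds
\[
\prob{W \ge \sigma t} \le \prob{\sigma\abs{\zeta} \ge \sigma t} \qquad\text{and}\qquad \prob{-W \ge \sigma t} \le \prob{\sigma\abs{\zeta'} \ge \sigma t}, \qquad t \ge 0,
\]
whose right-hand sides both equal $2\paren{1 - \Phi(t)}$, with $\Phi$ the standard Gaussian distribution function.

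To prove these I would use the sharp (isoperimetric) form of the Gaussian concentration inequality, most cleanly through a one-dimensional reduction. The Gaussian isoperimetric inequality implies that $c \mapsto \Phi^{-1}\paren{\prob{F(Z) \le c}}$ is nondecreasing with increments at least $s$ over intervals of length $\sigma s$; consequently $F(Z) \stackrel{d}{=} h(V)$ for some nondecreasing $\sigma$-Lipschitz $h$ and $V \sim \cN(0,1)$. This reduces the upper tail bound to the scalar statement $\prob{h(V) - \e{h(V)} \ge \sigma t} \le 2\paren{1-\Phi(t)}$. Letting $c_t$ be the level with $h(c_t) = \e{h(V)} + \sigma t$, the $\sigma$-Lipschitz property gives $c_t \ge c_0 + t$, hence $\prob{h(V) \ge \e{h(V)} + \sigma t} \le 1 - \Phi(c_0 + t)$, where $c_0 = \Phi^{-1}\paren{\prob{h(V) \le \e{h(V)}}}$.

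When the mean of $h(V)$ lies at or above its median one has $c_0 \ge 0$, so $1 - \Phi(c_0 + t) \le 1 - \Phi(t) \le 2\paren{1-\Phi(t)}$ and the bound is immediate; the reflected argument handles $-W$. With the two tail bounds in hand the coupling is routine: on an enlarged space define $\abs{\zeta}$ as the monotone quantile transform of $W_+$ onto the half-normal law and attach an independent Rademacher sign to obtain a genuine $\cN(0,1)$ variable $\zeta$ with $\sigma\abs{\zeta} \ge W_+ \ge W$ pointwise, and similarly construct $\zeta'$ from $W_-$; together these give \eqref{eq:gaussconc}.

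The main obstacle is the tail estimate, not the coupling. First, it is essential to obtain the genuine Gaussian tail $2\paren{1-\Phi(t)}$: the mean-centred Cirel'son--Ibragimov--Sudakov bound $e^{-t^2/2}$ strictly exceeds it for every $t > 0$ and would not yield domination by a true Gaussian, so the sharp isoperimetric inequality is needed. Second, the delicate case is when the mean falls strictly below the median, so that $c_0 < 0$ and the crude bound $1 - \Phi(c_0 + t)$ is no longer below $2\paren{1-\Phi(t)}$ for large $t$. The resolution I would carry out is that forcing the mean below the median makes $h$ strictly contractive on its upper range, so that $c_t$ grows faster than $c_0 + t$ and the actual upper tail remains below $2\paren{1-\Phi(t)}$; making this contraction quantitative is the technical heart of the argument.
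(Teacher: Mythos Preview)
The paper does not give a proof of this theorem; it is quoted as a classical result with references and used as a black box. So there is no ``paper's proof'' to compare against, and I comment on your attempt on its own terms.

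Your reduction is correct: via Strassen's theorem and the quantile coupling, the two one-sided inequalities in \eqref{eq:gaussconc} are equivalent to the tail bounds $\prob{\pm W>\sigma t}\le 2(1-\Phi(t))$ for all $t\ge 0$. You are also right that the textbook exponential bound $e^{-t^2/2}$ strictly exceeds $2(1-\Phi(t))$ for every $t>0$, so the standard tail form of the inequality is not enough and one must go through the sharp isoperimetric inequality. The one-dimensional reduction $F(Z)\stackrel{d}{=}h(V)$ with $h$ nondecreasing and $\sigma$-Lipschitz is valid, and your treatment of the case $c_0\ge 0$ is fine.

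The genuine gap is the case $c_0<0$. Your heuristic is that ``forcing the mean below the median makes $h$ strictly contractive on its upper range, so that $c_t$ grows faster than $c_0+t$''. But consider the instance $\sigma=1$, $h(v)=\min(v,0)$, which realises the maximal gap $h(0)-\e{h(V)}=1/\sqrt{2\pi}$ among nondecreasing $1$-Lipschitz functions. Here $c_0=-1/\sqrt{2\pi}$, and for every $t\in[0,1/\sqrt{2\pi})$ one has exactly $c_t=c_0+t$: there is no acceleration on this interval, because $h$ has slope $1$ there; the flatness of $h$ only kicks in at $t=1/\sqrt{2\pi}$, where $c_t$ jumps to $+\infty$. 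On $[0,1/\sqrt{2\pi})$ you are therefore still left with proving $1-\Phi(t-1/\sqrt{2\pi})\le 2(1-\Phi(t))$ directly (it is true, but not by your mechanism), and more generally with showing that no other choice of $h$ does worse. This is the missing content; as written, the hard half of the argument is asserted rather than proved.
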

It is known that in certain situations one can expect an even tighter concentration, through the phenomenon
known as superconcentration \cite{chatterjee2014superconcentration}. While such situations are likely to be relevant for the
setting considered in this paper, we leave such improvements as an open issue for future work.

We use the previous property in our setting as follows:
\begin{proposition}
\label{prop:expectation-to-quantiles}
  Let $\cC$ be finite a family of unit vectors of $\mbr^n$, $\xi$ a standard Gaussian vector
  in $\mbr^n$ and $N$ an independent nonnegative
  random variable so that $rN^2$ follows a chi-squared distribution
  with $r$ degrees of freedom. Define the random variable
  \[
    \gamma_{\cC,r} := \frac{1}{N} \max_{v \in \cC} \abs{v^t\xi}.
  \]
  Then the $(1-\alpha)$ quantile of $\gamma_{\cC,r}$ is upper bounded by the $(1-\alpha/2)$ quantile
  of a noncentral $T$ distribution with $r$ degrees of freedom and noncentrality parameter
  $\e{\max_{v \in \cC} \abs{v^t\xi}}$.
\end{proposition}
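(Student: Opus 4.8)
Write $W := \max_{v \in \cC} |v^t \xi|$, so that $\gamma_{\cC,r} = W/N$, and set $\theta := \mathbb{E}[W]$, which is finite because $\cC$ is finite. The plan is to feed Gaussian concentration of $W$ around $\theta$ through the independent random rescaling by $1/N$, and then to recognise the resulting dominating law as a noncentral $T$ distribution with noncentrality parameter $\theta$.

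The first step is to observe that the map $\xi \mapsto W = \max_{v\in\cC}|v^t\xi|$ is $1$-Lipschitz for the Euclidean norm on $\mathbb{R}^n$: each $\xi \mapsto v^t\xi$ is $\|v\| = 1$-Lipschitz, and taking absolute values followed by a finite maximum does not increase the Lipschitz constant. Applying Theorem~\ref{thm:gausconc} with $\sigma = 1$ yields a standard Gaussian $\zeta$ with $W \leq \theta + |\zeta|$ almost surely, hence $\mathbb{P}(W - \theta > t) \leq \mathbb{P}(|\zeta| > t) \leq 2\overline{\Phi}(t)$ for all $t \geq 0$, where $\overline{\Phi}(t) := \mathbb{P}(Z > t)$ denotes the standard Gaussian survival function. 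This inequality $\mathbb{P}(W - \theta > t) \leq 2\overline{\Phi}(t)$ persists trivially for $t < 0$ as well, since then $2\overline{\Phi}(t) > 1$.

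Next, let $K$ be the $(1-\alpha/2)$-quantile of the noncentral $T$ distribution with $r$ degrees of freedom and noncentrality parameter $\theta$; since $rN^2 \sim \chi^2_r$, this is the law of $(Z+\theta)/N$ where $Z$ is a standard Gaussian independent of $N$, so by definition of the quantile $\mathbb{P}\big((Z+\theta)/N > K\big) \leq \alpha/2$. Using that $N$ is independent of $\xi$, hence of $W$, I would condition on $N$ and apply the tail bound of the previous step with $t = KN - \theta$:
\[
\mathbb{P}(\gamma_{\cC,r} > K) = \mathbb{P}(W > KN) \leq \mathbb{E}_N\big[\, 2\overline{\Phi}(KN - \theta)\,\big] = 2\,\mathbb{P}\big(Z > KN - \theta\big) = 2\,\mathbb{P}\big((Z+\theta)/N > K\big) \leq 2\cdot\frac{\alpha}{2} = \alpha,
\]
where in the middle equalities $Z$ is a standard Gaussian independent of $N$ and one uses $\overline{\Phi}(u) = \mathbb{P}(Z>u)$. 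Consequently $\mathbb{P}(\gamma_{\cC,r} \leq K) \geq 1 - \alpha$, which is exactly the statement that the $(1-\alpha)$-quantile of $\gamma_{\cC,r}$ is at most $K$.

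The computation is short, and the only point that warrants some care is the factor-$2$ bookkeeping: bounding by $2\overline{\Phi}$ (i.e. going through $|\zeta|$ rather than through a one-sided control of $W - \theta$) is precisely what turns the level $\alpha$ on the $\gamma_{\cC,r}$ side into the level $\alpha/2$ on the noncentral $T$ side, so the $\alpha/2$ in the statement is not an artefact one can trivially remove at this level of generality. Everything else is routine, including the degenerate case $r = \infty$: there $N \equiv 1$, the noncentral $T$ law reduces to $\mathcal{N}(\theta,1)$, and the same argument applies verbatim.
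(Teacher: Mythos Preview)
Your proof is correct and follows essentially the same route as the paper's: both use the $1$-Lipschitz property of $\xi\mapsto\max_{v\in\cC}|v^t\xi|$, invoke Gaussian concentration to dominate $W$ by $\theta+|\zeta|$, and trace the factor $2$ from $|\zeta|$ through to the $\alpha/2$ in the noncentral $T$ quantile. The only cosmetic difference is that the paper writes $(\theta+|\zeta|)/N=\max(T_+,T_-)$ with $T_\pm=(\theta\pm\zeta)/N$ and applies a union bound, whereas you phrase the same step as a tail inequality $\mathbb{P}(W>\theta+t)\le 2\overline{\Phi}(t)$ and condition on $N$; your formulation has the minor advantage of sidestepping the need to argue that the auxiliary $\zeta$ is independent of $N$.
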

\begin{proof}
  Observe that $\xi \mapsto \max_{v \in \cC} \abs{v^t\xi}$ is 1-Lipschitz since the vectors of $\cC$
  are unit vectors. Therefore we conclude by Theorem~\ref{thm:gausconc} that there exists a standard
  normal variable $\zeta$ (which is independent of $N$ since $N$ is independent of $\xi$) so that
  the following holds:
  \[
    \gamma_{\cC} \leq \frac{1}{N}
    \paren[2]{ \e[2]{\max_{v \in \cC} \abs{v^t\xi}} + \abs{\zeta} }.
  \]
  We can represent the above right-hand side as $\max(T_+,T_-)$ where
  \[
    T_\pm = \frac{1}{N}
    \paren[2]{  \e[2]{\max_{v \in \cC} \abs{v^t\xi}} \pm \zeta },
  \]
  i.e. $T_+,T_-$ are two (dependent) noncentral $t$ distributions with $r$ degrees of freedom and
  noncentrality parameter $\e{\max_{v \in \cC} \abs{v^t\xi}}$. Finally since
  \[
    \prob{\max(T_+,T_-) > t} \leq \prob{T_+ > t} + \prob{T_- > t} = 2 \prob{T_+ > t},
  \]
  we obtain the claim.
\end{proof}

Since a noncentral distribution is (stochastically) increasing in its noncentrality parameter,
any bound obtained for $\e{\max_{v \in \cC} \abs{v^t\xi}}$ will result in a corresponding bound
on the quantiles of the corresponding noncentral $T$ distribution and therefore of those of
$\gamma_\cC$. In the limit $r\rightarrow \infty$, the quantiles of the noncentral
$T$ distribution reduce to those of a shifted Gaussian distribution with unit variance.

Here is a naive bound on (some) quantiles of a noncentral $T$:
\begin{lemma}
  The  $1-\alpha$ quantile of a noncentral $T$ distribution with $r$
  degrees of freedom and noncentrality parameter $\mu \geq 0$ is upper bounded by:
  \[
    (\mu + \sqrt{2 \log (2/\alpha)}/(1- 2\sqrt{2\log (2/\alpha)/r})_+.
    \]
\end{lemma}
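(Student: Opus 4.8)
The plan is to exploit the scale-mixture representation of the noncentral $T$: a noncentral $T$ with $r$ degrees of freedom and noncentrality parameter $\mu$ has the law of $T = (Z+\mu)/N$, where $Z\sim\mathcal{N}(0,1)$ is independent of the nonnegative variable $N$ with $rN^2$ chi-squared with $r$ degrees of freedom (for $r=\infty$ one reads $N\equiv 1$). Write $L:=\sqrt{2\log(2/\alpha)}$ and $\beta := 2L/\sqrt{r} = 2\sqrt{2\log(2/\alpha)/r}$, so that the claimed bound is $t^{*}:=(\mu+L)/(1-\beta)_+$. If $\beta\geq 1$ then $t^{*}=+\infty$ and there is nothing to prove, so assume $\beta<1$; then $t^{*}\geq 0$ and $t^{*}(1-\beta)=\mu+L$. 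Since the $(1-\alpha)$-quantile of $T$ is at most $t^{*}$ as soon as $\mathbb{P}(T>t^{*})\leq\alpha$, it suffices to establish this last inequality.

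To do so I would decompose the event according to whether $N$ falls well below its (asymptotic) mean $1$. On $\{N\geq 1-\beta\}$, since $t^{*}\geq 0$ and $1-\beta>0$ we have $t^{*}N\geq t^{*}(1-\beta)=\mu+L$, hence $\{T>t^{*}\}\cap\{N\geq 1-\beta\}\subseteq\{Z>L\}$; consequently
\[
\mathbb{P}(T>t^{*}) \;\leq\; \mathbb{P}(N<1-\beta) \;+\; \mathbb{P}(Z>L).
\]
The second term is controlled by the Gaussian tail bound $\mathbb{P}(Z>L)\leq e^{-L^2/2}=e^{-\log(2/\alpha)}=\alpha/2$; this is exactly where the numerator $\mu+L$ of the claimed bound comes from.

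For the first term I would invoke a Chernoff bound for the lower tail of a chi-squared variable (equivalently, the standard Laurent--Massart estimate): $\mathbb{P}(N^2\leq v)\leq\exp\!\big(-\tfrac{r}{4}(1-v)^2\big)$ for $v\in(0,1]$. Applying this with $v=(1-\beta)^2$ and using $1-(1-\beta)^2=\beta(2-\beta)\geq\beta$ gives
\[
\mathbb{P}(N<1-\beta)=\mathbb{P}\!\big(N^2\leq(1-\beta)^2\big)\leq e^{-r\beta^2/4}=e^{-L^2}=(\alpha/2)^2\leq\alpha/2,
\]
the penultimate equality using $\beta=2L/\sqrt{r}$. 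Adding the two bounds yields $\mathbb{P}(T>t^{*})\leq\alpha$, which is the desired conclusion.

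The only mildly delicate point is the chi-squared step: one must pass correctly from $\{N<1-\beta\}$ to $\{N^2<(1-\beta)^2\}$, use the right lower-tail exponent for $\chi^2_r$, and observe that the numerical factor $2$ in $\beta$ is (more than) enough to make that exponent exceed $\log(2/\alpha)$ — this is constant-bookkeeping rather than a genuine obstacle. Alternatively, the needed lower-tail bound for $\chi^2_r$ can itself be derived from the Gaussian concentration inequality of Theorem~\ref{thm:gausconc} applied to the $1$-Lipschitz map $g\mapsto\|g\|$ on $\mathbb{R}^r$, at the cost of slightly worse numerical constants.
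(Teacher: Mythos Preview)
Your proof is correct and follows essentially the same approach as the paper: represent $T=(\mu+\zeta)/\sqrt{V/r}$, split the tail event according to whether the denominator falls below $1-\beta$, and control the two pieces by the Gaussian tail bound and a chi-squared lower-tail estimate (the paper cites Birg\'e's inequality $\mathbb{P}(\sqrt{V}\leq\sqrt{r}-2\sqrt{2\log\eta^{-1}})\leq\eta$, which plays the same role as your Laurent--Massart/Chernoff bound). The only cosmetic difference is that you obtain $(\alpha/2)^2$ on the chi-squared side and then relax it to $\alpha/2$, whereas the paper lands directly on $\alpha/2$; the logic and the resulting constant are identical.
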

\begin{proof}
  Let
  \[
    T = \frac{\mu + \zeta}{\sqrt{V/r}}\,,
  \]
  where $\zeta\sim \cN(0,1)$ and $V\sim \chi^2(r)$.
  We have (as a consequence of e.g. \cite{Birge01}, Lemma 8.1),
  for any $\eta \in (0,1]$:
  \[
    \prob{ \sqrt{V} \leq \sqrt{r} - 2 \sqrt{2\log \eta^{-1}}} \leq \eta,
  \]
  as well as the classical bound
  \[
    \prob{ \zeta \geq \sqrt{2 \log \eta^{-1}}} \leq \eta.
  \]
  It follows that
  \[
    \prob{T \geq (\mu + \sqrt{2 \log \eta^{-1}})/(1- 2\sqrt{2\log (\eta^{-1})/r})_+} \leq 2\eta.
    \]
    The claimed estimate follows.
\end{proof}

\section{Proofs}
\label{sec:proofs}

\begin{proof}[Proof of Lemma \ref{lem:posi:corrX}]
With the notation of Definition \ref{def:Kposi},
$K( X , \cM , \alpha , r )$ is the $1 - \alpha$ quantile of $(1/N)\|z\|_{\infty} $ where $z = ( z_{M,i}, M \in \cM, i \in \cM )$
is a Gaussian vector, independent of $N$, with mean vector zero and covariance matrix $\mathrm{corr} ( \Sigma ) $, where $\Sigma$ is defined by, for $i \in M \in \cM$ and $i' \in M' \in \cM$,
\begin{align*}
\Sigma_{(M,i),(M',i')}
& =
v_{M,i}^t v_{M',i'} \\
& =
(e_{i.M}^{|M|})^t
( X_M^tX_M )^{-1}
X_{M}^t X_{M'}
( X_{M'}^tX_{M'} )^{-1}
e_{i'.M'}^{|M'|}.
\end{align*}
Hence, $\Sigma$ depends on $X$ only through $X^t X$. Also, if $X$ is replaced by $X D$, where $D$ is a diagonal matrix with positive components, $\Sigma$ becomes the matrix $\Lambda$ with
for $i \in M \in \cM$ and $i' \in M' \in \cM$,
\begin{align*}
\Lambda_{(M,i),(M',i')}
& =
(e_{i.M}^{|M|})^t
D_{M,M}^{-1}
( X_M^tX_M )^{-1}
X_{M}^t X_{M'}
( X_{M'}^tX_{M'} )^{-1}
D_{M',M'}^{-1}
e_{i'.M'}^{|M'|} \\
& =
D_{i,i}^{-1}
D_{i',i'}^{-1}
\Sigma_{(M,i),(M',i')}.
\end{align*}
Hence, $\mathrm{corr}(\Sigma) = \mathrm{corr}(\Lambda)$.
This shows that $\Sigma$ depends on $X$ only through $\mathrm{corr}(X^t X)$ (we remark that because $\cup_{ \cM} M = \{1,\ldots,p\}$ and each $X_M^t X_M$ is invertible we have that $\|X_i\| > 0$ for $i=1,\ldots,p$). Hence $K( X , \cM , \alpha , r )$ depends on $X$ only through $\mathrm{corr}(X^t X)$.
\end{proof}

\begin{proof}[Proof of Lemma \ref{lm:simplecard}]\label{proof:simplecard}
  Using a direct cardinality-based bound we have the well-known inequality
  $\e{\gamma_{\cM_s,\infty}} \leq  \sqrt{2 \log (2 \abs{\cM_s})}$, hence
  \begin{equation*}
    \e{\gamma_{\cM_{s},\infty}} \leq \sqrt{2 \log \paren[4]{ 2\sum_{i=1}^s  i \binom{p}{i}}} ,
  \end{equation*}
  moreover
  \[
    \sum_{i=1}^s  i \binom{p}{i} \leq s \sum_{i=0}^s \binom{p}{i} \leq s \paren{\frac{pe}{s}}^s,
  \]
  the last inequality being classical and due to
  \[
    \paren{\frac{s}{p}}^s \sum_{i=0}^s \binom{p}{i} \leq \sum_{i=0}^s \paren{\frac{s}{p}}^i \binom{p}{i}
    \leq \paren{1+\frac{s}{p}}^p \leq e^s.
  \]
  Since $\log s \leq s/e$, and using $e^{1+2/e}\leq 6$, we obtain
  \[
    \log \left(2\sum_{i=1}^s  i \binom{p}{i}\right) \leq \log 2s + s \log \paren{\frac{pe}{s}}
    \leq s \log\paren{\frac{p}{s} e^{1+2/e}} \leq s \log\paren{\frac{6p}{s} },
  \]
  implying \eqref{eq:simplecard}.
  \end{proof}

\begin{proof}[Proof of Lemma \ref{lem:bar:delta:delta}]

  Put $\kappa=\kappa (X,s) <1$. Then, $\|X_i\| \geq (1-\kappa)^{1/2}$ for $i=1,...,p$ so that for $i \in M \in \cM_s$, $\mathrm{corr}(X_M^tX_M) = D_M X_M^t X_M D_M$ where $D_M$ is a $|M| \times |M|$ matrix defined by $[D_M]_{i.M,i.M} = 1/\|X_i\|$. Hence $\norm{D_M}_{op} \leq 1/\sqrt{1-\kappa}$. We have, by applications of the triangle inequality and since $\norm{.}_{op}$ is a matrix norm,
\begin{align}
\MoveEqLeft \norm{ \mathrm{corr}(X_M^tX_M) - I_{|M|} }_{op} \notag \\
= & 
\norm{
(D_M - I_{|M|}) X_M^tX_M D_M
+
 X_M^tX_M ( D_M - I_M)
 +
  X_M^tX_M
 - I_{|M|}
}_{op}
\notag
\\
\leq &
\norm{
D_M - I_{|M|}
}_{op}
\norm{
X_M^tX_M
}_{op}
\norm{
D_M
}_{op}
+
\norm{
D_M - I_{|M|}
}_{op}
\norm{
X_M^tX_M
}_{op}
\notag
\\
& +
\norm{
X_M^tX_M - I_{|M|}
}_{op} \notag
\\
 = &
\norm{
D_M - I_{|M|}
}_{op}
\norm{
X_M^tX_M
}_{op}
\paren{\norm{D_M}_{op} + 1}
+ \norm{
  X_M^tX_M - I_{|M|}
}_{op}. \label{eq:upper-bound-rip}
\end{align} 
From \eqref{eq:def-rip-constant}-\eqref{RIP}, we have for all $M\in \cM_s$: $\norm{X_M^tX_M}_{op} \leq 1+\kappa,$ as well as
\begin{align*}
\norm{
D_M - I_{|M|}
}_{op}
\leq &
 \max_{i=1,...,p}
 \left|
 \frac{1}{\|X_i\|}
 -1
 \right| \\
\leq & \max
       \left(
       1-
\frac{1}{\sqrt{1+ \kappa}},
\frac{1}{\sqrt{1- \kappa}}
-1
 \right)\\
= & \frac{1}{\sqrt{1- \kappa}}
-1.
\end{align*}
Plugging this into \eqref{eq:upper-bound-rip}, we obtain

\begin{align*}
  \delta(X, s)
& \leq
\paren{ \frac{1}{\sqrt{1- \kappa}} - 1}
\paren{1 + \kappa}
\paren{ \frac{1}{\sqrt{1- \kappa}} + 1}
+ \kappa \\
&  = \frac{2 \kappa}{1 - \kappa}.
\end{align*}
\end{proof}

\begin{proof}[Proof of Theorem \ref{thm:rip-upper-bound}]
From Lemma \ref{lem:posi:corrX}, it is sufficient to treat the case where, for any $M$, $G_M = X_M^t X_M$ has ones on the diagonal; in that case $\delta(X, s) = \kappa(X,s)$.
We have
\begin{align*}
v_{M,i}^t
& =
(e_{i.M}^{|M|})^t G_M^{-1} X_M^t \\
& =
(e_{i.M}^{|M|})^t I_{|M|} X_M^t
+
(e_{i.M}^{|M|})^t
\left( G_M^{-1} - I_{|M|} \right)
 X_M^t \\
 & =
 X_{i}^t
 + r_{M,i}^t,
\end{align*}
say. We have
\begin{align*}
r_{M,i}^t r_{M,i}
& =
(e_{i.M}^{|M|})^t
\left( G_M^{-1} - I_{|M|} \right) G_M
\left( G_M^{-1} - I_{|M|} \right)
e_{i.M}^{|M|}
 \\
& \leq
\norm[1]{ e_{i.M}^{|M|} }^2
\norm{ G_M^{-1} - I_{|M|}  }_{op}^2
\norm{ G_M }_{op}.
\end{align*}
From \eqref{RIP}, the eigenvalues of $G_M$ are all between $(1-\delta)$ and $(1 + \delta)$, hence we have
\[
r_{M,i}^t r_{M,i}
\leq
\left(
\frac{ \delta }{ 1-\delta }
\right)^2
(1 + \delta),
\]
so that letting $c(\delta) = \sqrt{1+\delta}/(1-\delta)$
\[
\norm{ r_{M,i} }
\leq
\delta c(\delta),
\]
and
\begin{align*}
\norm{w_{M,i} - X_i} = \norm[2]{
\frac{v_{M,i}}{\norm{ v_{M,i}}}
-
X_i }
& =
\norm[2]{
\frac{v_{M,i}}{ \norm{v_{M,i}} }
\left(
1 - \norm{v_{M,i}}
\right)
+v_{M,i}
-X_i}\\
&  \leq
2 \norm{r_{M,i} },
\end{align*}
from two applications of the triangle inequality,
and using that $\norm{X_i}=1$ since we assumed that $G_M$ has ones on its diagonal for all $M$.
Hence, we have
  \begin{align}
  \e{\gamma_{\cM_s,\infty}}
  & = \e[3]{\sup_{M \in \cM_s; i \in M}
    | w_{M,i}^t \xi | } \notag \\
  & \leq \e[3]{
    \sup_{M \in \cM_s; i \in M}
    | X_i^t \xi | }
    + \e[3]{\sup_{M \in \cM_s; i \in M}
    \left|
\left(
    w_{M,i}
-
    X_i
    \right)^t
    \xi
    \right|} \notag \\
  & \leq
    \e[3]{\sup_{i =1,\ldots,p}
    | X_i^t \xi | } \notag \\
  & \quad + 2 \delta c(\delta)
    \e[3]{\sup_{M \in \cM_s; i \in M}
    \abs[3]{
    \left(
    \frac{
    w_{M,i}
    -
    X_i}
    {
    \norm{w_{M,i}
    -
    X_i}}
\right)^t
\xi }} \label{eq:for-B-alpha} \\
& \leq
\sqrt{2 \log (2p)} 
+ 2 \delta c(\delta)
\sqrt{ 2s \log(6p / s)},  \notag 
\end{align}
where in the last step we have used Lemma~\ref{lm:simplecard}.
\end{proof}

\begin{proof}[Proof of Lemma \ref{lm:rip-constant}]
\label{proof:rip-constant}
Since $\|X_i\| = 1$ for $i=1,...,p$ we have $\mathrm{corr}(X^tX) = X^tX$ and so $\kappa(X,s) = \delta(X,s)$.
The Gram matrix in \eqref{eq:gram-xck} can be written as $I_p + c U_{p,k}$, where $U_{p,k}$ is the $3 \times 3$ block matrix with sizes $(k, p-k-1, 1) \times (k, p-k-1, 1)$ defined by

\begin{equation*}
 U_{p,k} =
\begin{bmatrix}
    [0] &  [0] & [1] \\
    [0] & [0] & [0]\\
    [1] & [0] & 0\\
\end{bmatrix}.
\end{equation*}

Consider a model $M$ with $|M| =s \leq k \leq p-1$, and denote by $G_M$ its Gram matrix.  If $p \notin M$,  then $G_M=I_s$ and $\norm{G_M - I_s}_{op}=0$. If $p \in M$, then $G_M = I_s + c U_{s,\mk}$, where $m=\mk(M) = \left| (M \setminus \{p\}) \cap \{1, \dots k\} \right| \leq s-1$.  The operator norm of $G_M - I_s$ is the square root of the largest eigenvalue of $(cU_{s,\mk})^2$, where $U_{s,\mk}^2$ is a $3 \times 3$ block matrix with sizes $(\mk, s-\mk-1, 1) \times (\mk, s-\mk-1, 1)$ defined by

\begin{equation*}
 U_{s,\mk}^2 =
\begin{bmatrix}
     [1] &  [0] & [0] \\
    [0] & [0] & [0]\\
    [0] & [0] & \mk\\
\end{bmatrix}.
\end{equation*}

The first block is a $\mk \times \mk$ matrix with all entries equal to 1, hence its only non-null eigenvalue is $\mk$. This is also the (only) eigenvalue of the last block (an $1 \times 1$ matrix). Thus, the largest eigenvalue of $U_{s,\mk}^2$ is $\mk$. Therefore, as $\mk \leq s-1$, we have $\Vert G_{M} - I_s \Vert_{op} = c\sqrt{s-1}$ for all $M$ such that  $|M| =s \leq k \leq p-1$, which concludes the proof.
\end{proof}

\begin{proof}[Proof of Proposition~\ref{prop:matching-lower-bound}]
\label{proof:matching-lower-bound}
Without loss of generality (by Lemma~\ref{lem:posi:corrX}) we can assume that $X = Z^{(c,k)}$, where $Z^{(c,k)}$ is the $p \times p$ matrix defined as the beginning of Section~\ref{sec:equi-corr-design}.
The proof is an extension of the proof of  \cite[Theorem 6.2]{Berk13}.
For $m\geq 0$, consider a model $M$ such that $M \ni p$,
$M \cap \set{k+1,\ldots,p-1} = \emptyset$, and $\abs{M}=m+1$; in other words,
$M=\set{i_1,\ldots,i_m,p}$ such that $i_1,\ldots,i_m$ are elements of $\set{1,\ldots,k}$.
Denote as $\cM^{+p}_{m:k}$ the set of all such models.
Let $u_{M,p} = Z_{p} - P_{M\setminus\set{p}}( Z_p )$, where $Z_p$ is the last column of $Z^{(c,k)}$, and where $P_{M\setminus\set{p}}( Z_p )$ is the orthogonal projection of $Z_p$ onto the span of the columns with indices $M\setminus\set{p}$.
Observe that the column $i_j$ of $Z^{(c,k)}$ is the $i_j$-th base column vector of $\mathbb{R}^p$ that we write $e_{i_j}$, therefore 
\[
 P_{M\setminus\set{p}}(Z_p)  = \sum_{j=1}^m (e_{i_j}^t Z_p) e_{i_j} = c( e_{i_1} + \ldots + e_{i_m}).
 \]
 Hence, we have, for $M \in \cM^{+p}_{m:k}$,
\[
  \brac[1]{u_{M,p} }_j
=
\begin{cases}
0 & \text{ for } j=k+1,\ldots,p-1, \\
0 & \text{ for } j=1,\ldots,k; j \in M, \\
c &  \text{ for } j=1,\ldots,k; j \not \in M, \\
\sqrt{ 1 - k c^2 } & \text{ for } j=p.
\end{cases}
\]
Recall that we have $w_{M,p} = u_{M,p} / \|u_{M,p}\|$. Hence,
for $M \in \cM^{+p}_{m:k}$,
\[
\left[ w_{M,p} \right]_j
=
\begin{cases}
0 &\text{ for } j=k+1,\ldots,p-1, \\
0 & \text{ for } j=1,\ldots,k; j \in M, \\
{c/\sqrt{ 1 - mc^2 }} & \text{ for } j=1,\ldots,k; j \not \in M, \\
\sqrt{ 1 - k c^2 } /{\sqrt{ 1 - mc^2 }}
& \text{ for } j=p.
\end{cases}
\]
Hence, we have
\begin{align*}
\e{\gamma_{\cM_s,\infty}} & =
\e{\max_{|M| \leq s , i \in M}
| w_{M,i}^t \xi |} \\
& \geq
\e[3]{\max_{M \in \cM^{+p}_{(s-1):k}}
 w_{M,p}^t \xi } \\
& =\e[4]{
\frac{ \sqrt{ 1 - k c^2 } }{\sqrt{ 1 - (s-1)c^2 }}
\xi_p
+ \frac{c}{\sqrt{ 1 - (s-1)c^2 }}
\sum_{j=1}^{k-s+1} \xi_{k-j:k}
                      },
\end{align*}
where $\xi_{1:k} \leq \ldots \leq \xi_{k:k}$ are the order statistics of $\xi_1,\ldots,\xi_k$. Hence, since $s-1 < k$, we obtain
\begin{align*}
\e{\gamma_{\cM_s,\infty}} & \geq
0
+
\frac{c}{\sqrt{ 1 - (s-1)c^2 }}
\e[4]{\sum_{j=1}^{k} \xi_{j} -
\sum_{j=1}^{s-1} \xi_{j:k}}
  \\
& =
\frac{c}{\sqrt{ 1 - (s-1)c^2 }}
       \e[4]{\sum_{j=1}^{s-1} \xi_{k-j:k}}\\
       & \geq \frac{c}{\sqrt{ 1 - (s-1)c^2 }} \e[4]{\sum_{j=1}^{s-1}
\max_{l=1,\ldots,\lfloor k / s \rfloor}
\xi_{ (j-1) \lfloor k/s \rfloor + l }}.
\end{align*}
In the above display, each maximum has mean value larger than $A \sqrt{\log  \lfloor k/s \rfloor}$, with $A>0$ a universal constant (see e.g. Lemma A.3 in \cite{chatterjee2014superconcentration}).
Hence, we have
\[
  \e{\gamma_{\cM_s,\infty}}  \geq A
\frac{c(s-1)}{\sqrt{ 1 - (s-1)c^2 }}  \sqrt{ \log  \lfloor k/s \rfloor }.
\]
Finally, a consequence of Gaussian concentration  (Theorem~\ref{thm:gausconc})
is that mean and median of $\gamma_{\cM_s,\infty}$ are within $\sqrt{2\log 2}$ of
each other. Since we assumed $\alpha \leq \frac{1}{2}$,
$K(Z^{(c,k)},\cM_s,\alpha,\infty) \geq \e{\gamma_{\cM_s,\infty}} - \sqrt{2\log 2}$,
which concludes the proof.
\end{proof}

\begin{proof}[Proof of Corollary \ref{cor:optimality:bound}]

When $\delta_p \sqrt{s_p} \sqrt{ 1 - \log s_p / \log p + 1 / \log p } \to \infty$, one can see that in Theorem \ref{thm:rip-upper-bound}, the first term is negligible compared to the second one. Since $\delta_p \to 0$, the first result \eqref{eq:2} follows from Theorem \ref{thm:rip-upper-bound}.

We now apply Proposition~\ref{prop:matching-lower-bound} with $c_p = \delta_p / \sqrt{s_p  - 1}$ and $k_p = \min(p-1, \lfloor 1/c_p^2 - 1 \rfloor )$.
From Lemma \ref{lm:rip-constant}, $\delta(Z^{(c_p,k_p)}, s_p) \leq c_p \sqrt{s_p - 1} = \delta_p$.
We then have, with two positive constants $A'$ and $A$,
\begin{align*}
 K(Z^{(c_p,k_p)}, \cM_s,\alpha, \infty)
 \geq &
 A'
 \delta_p \sqrt{s_p}
 \sqrt{
\log \left(
\left\lfloor
\frac{\min( p-1 , \lfloor 1/c_p^2 - 1 \rfloor) }{ s_p}
\right\rfloor
\right)
}
\\
\geq &
A \delta_p \sqrt{s_p}
\sqrt{
\log
\left(
\min( \lfloor (p-1) /s_p \rfloor , 1/\delta_p^2 )
\right).
}
\end{align*}
This concludes the proof of \eqref{eq:1}.
\end{proof}

\section{Code for computing \texorpdfstring{$B_\ell(q, r, \rho)$}{Bl(q,r,rho)}}
\label{apx:code}

\begin{verbatim}
Bl <- function(q, r, rho, l, I = 1000) {
    ##
    ## Compute an upper bound for the quantile 1-l of
    ## max_{i=1,...,rho} (1/N) | w_i' V |
    ## where:
    ##   - the w_1,...w_{rho} are unit vectors
    ##   - V follows N(0,I_q)
    ##   - N^2/r follows X^2(r)
    ##
    ## Adapted from K4 in Bachoc, Leeb, Poetscher 2018
    ##
    ## Parameters:
    ## q.......: dimension of the Gaussian vector
    ## r.......: degrees of freedom for the variance estimator
    ## rho.....: number of unit vectors
    ## l.......: type I error rate (1 - confidence level)
    ## I.......: numerical precision
    ##
    ## Value:
    ##   A numerical approximation of the upper bound
    ##
    ##  vector of quantiles of Beta distribution:
    vC <-  qbeta(p = seq(from = 0, to = 1/rho, length = I),
                 shape1 = 1/2, shape2 = (q-1)/2,
                 lower.tail = FALSE)
    ## Monte-Carlo evaluation of confidence level 
    ## for a constant K
    fconfidence <- function(K){
        prob <- pf(q = K^2/vC/q, df1 = q, 
                   df2 = r, lower.tail = FALSE)
        mean(prob) - l
    }
    quant <- qf(p = l, df1 = q, df2 = r, lower.tail = FALSE)
    Kmax <- sqrt(quant) * sqrt(q)
    uniroot(fconfidence, interval = c(1, 2*Kmax))$root
}


\end{verbatim}

\addcontentsline{section}{toc}{References}
\bibliographystyle{abbrv}
\bibliography{Biblio}

\end{document}